\documentclass[12pt]{amsart}
\usepackage{enumerate}
\usepackage{hyperref}
\usepackage{color}
\textwidth       15.0cm
 \evensidemargin   0.6cm         
 \oddsidemargin    0.6cm
 \theoremstyle{plain}
 \newtheorem{theorem}{Theorem}

 \newtheorem{lemma}{Lemma}
 \newtheorem{corollary}{Corollary}
\newtheorem{proposition}{Proposition}

\theoremstyle{definition}
\newtheorem{definition}{Definition}
\newtheorem{example}{Example}
\newtheorem*{assumption}{Assumption}
 
 \newtheorem{remark}{Remark}

\newcommand{\E}{\mathbb E}

 \newcommand{\T}{\mathbb T}
 
 \newcommand{\R}{\mathbb R}

\newcommand{\cA}{\mathcal A}
 
 \newcommand{\cD}{\mathcal D}
 
  \newcommand{\HD}{H^1_\cD}

  \newcommand{\cL}{\mathcal L}

\newcommand{\cB}{\mathcal B}

 \newcommand{\tW}{\tilde{W}}

\newcommand{\af}{\alpha}
 \newcommand{\tPhi}{\tilde\Phi}
 \newcommand{\fui}{\varphi}
 \newcommand{\ep}{\varepsilon}

\newcommand{\ga}{\gamma}
\newcommand{\bga}{\bar\gamma}
 
 \newcommand{\Ga}{\Gamma}
 \newcommand{\de}{\delta}
 
 \newcommand{\om}{\omega}
 \newcommand{\Om}{\Omega}
  
\newcommand{\Lam}{\Lambda}
 
 \newcommand{\si}{\sigma}
 \newcommand{\dga}{\dot\gamma}

 \newcommand{\rip}{\rangle}
 \newcommand{\lip}{\langle}

\newcommand{\diver}{\operatorname{div}}

\newcommand{\tr}{\operatorname{Tr}}

\newcommand{\vol}{\operatorname{vol}}

\begin{document}
\title[Selection of weak KAM solution]
{A selection of a weak KAM solution of the
  sub-riemannian  Ma\~n\'e Lagrangian}
\author[I. Mart\'inez Ju\'arez]{Iker Mart\'inez Ju\'arez}
\address{Instituto de Matem\'aticas. UNAM}
\email{iker@im.unam.mx}
\author[H.  S\'anchez Morgado]{H\'ector S\'anchez Morgado}
\address{Instituto de Matem\'aticas. UNAM}
\email{hector@matem.unam.mx}
\begin{abstract}
 For  a sub-riemannian structure on the torus, satisfying the H\"ormander
condition, we consider the Ma\~n\'e Lagrangian associated to a horizontal vector field.
 Assuming that the Aubry set consists in a finite number of static classes,
 we show that the invariant measure, for the horizontal stochastic perturbation of
 the flow of  the vector field, determines a particular weak KAM
 solution of the Lagrangian, as the perturbation tends to zero.
\end{abstract}
\maketitle
\section{Preliminaries}
\label{sec:pre}
Generally, the stationary Hamilton Jacobi equation has not a unique
viscosity solution. Several approximation schemes to select a
particular viscosity solution have been studied during this century
\cite{AIPS, DFIZ, GISY}.  
This paper extends a scheme, considered in the recent paper \cite{GL}
for particular dynamics in dimension 1, to higher dimensions,
more general dynamics and to the sub-riemannian setting.

\subsection{Random perturbations}
\label{sec:random-perturbations}
For $\si_1,\ldots,\si_r$ smooth vector fields on $\T^d$, linearly
indepent at each $x\in\T^d$,  consider the bundle $\cD$ spanned by
these vector fields and set $\si=[\si_1,\ldots,\si_r]$.
We assume that $\si_1,\ldots,\si_r$ satisfy the H\"ormander condition 
\begin{equation}
\cL(\si_1,\ldots,\si_r)(x)=\R^d,~\forall x\in\T^d,\label{eq:Ho}
\end{equation}
where $\cL(\si_1,\ldots,\si_r)$
denotes the Lie algebra generated by the given vector fields.

For a smooth horizontal vector field $b$
consider the stochastic differential equation  
\begin{equation}\label{sde}
  dX^\ep_t=b(X^\ep_t)\ dt+\sqrt{\ep}\ \si(X^\ep_t)\ dW_t,\quad X^\ep_0=x
\end{equation}
where $W_t$ is an $r$-dimensional brownian motion.
For $u\in C(\T^d)$ define
\[A^\ep u(x):=\lim\limits_{t\to 0}\dfrac{\E^\ep_x[u(X^x_t)]-u(x)}t.\]
If $u\in C^2(\T^d)$ we have that
\[A^\ep u(x)=\frac{\ep}2\tr(a(x)D^2u(x))+b(x)\cdot\nabla
  u(x),\quad a(x)=\si(x)\si^t(x).\]

A generalized solution to the Cauchy problem for the backward Kolmogorov equation,
\begin{equation}\label{bKe}
  \frac{\partial u(t,x)}{\partial t}=A^\ep u(x,t),\quad u(0,x)=f(x),\quad t>0
\end{equation}
is given by
\[u(t,x)=\E^\ep_x[f(X^\ep_t)]=T_tf(x):=\int_{\T^d}P^\ep(t,x,dy)f(y)\]
where $P^\ep(t,x,\Ga)=P^\ep_x(X^\ep_t\in\Ga)$ is the law of $X^\ep_t$, which
has a density $p(t,x,y)$ that is the fundamental solution of the \eqref{bKe}.

A stationary or invariant probability $\mu_\ep$ in  $\cB(\T^d)$ is a
probability measure for which 
$\displaystyle U_t \mu_\ep:= \int_M P^\ep(t, x, dy) \mu_\ep(dx)= \mu_\ep$ for all
$t>0$. According to \cite{Kl}, under the H\"ormander condition, there
is a unique stationary probability which has a density $m_\ep(x)$ that
is a solution of the Fokker-Planck problem 
\begin{equation}\label{FP}
  \frac{\ep}2\sum_{i,j}D_{ij}^2(a^{ij}(x)m(x))-\diver(b(x)m(x))=0, \quad m(x)>0,\quad \int_{\T^d}m(x)dx=1.
\end{equation}

Letting $\fui_\ep(x)=-\dfrac\ep2\log m_\ep(x)$ , we have that $\fui_\ep(x)$ is a solution of
\begin{equation}
  \label{eq:HJv} 
 \frac{\ep}2\sum_{i,j}\frac{\ep}2 D^2_{ij}a^{ij}-[a^{ij}D_{ij}^2\fui+
  2D_ia^{ij}D_j\fui+\diver b]+  |D\fui\si|^2+bD\fui=0. 
  \end{equation}
From the theory of viscosity solutions we know that if $\fui_{\ep_n}$
converges uniformly to a function $\phi$ as $\ep_n\to 0$, then $\phi$ is a
viscosity solution of the Hamilton-Jacobi equation 
\begin{equation}
  \label{eq:HJ}
  H(x,D\phi(x))=0
\end{equation}
where $H(x,p)=|p\si(x)|^2+pb(x)$ is the Hamiltonian associated to the
sub-riemannian  Ma\~n\'e Lagrangian $L:\cD\to\R$
\begin{equation}
  \label{ML}
L(x,v)=\frac 14\|v-b(x)\|_\cD^2  
\end{equation}
where $\lip\ ,\ \rip_\cD$ is the natural metric on $\cD$  defined by 
$\lip \si(x)v,\si(x)w\rip_\cD=\lip v,w\rip$.
Since $b$ is a horizontal vector field, there is a
smooth map $\bar b:\T^d\to\R^r$ such that $b(x)=\si(x)\bar b(x)$, and so
$H(x,p)=|p\si(x)|^2+p\si(x)\bar b(x)$.

\subsection{Weak KAM theory for sub-riemannian Lagrangians.}
\label{sec:weak-kam-theory}
See \cite{SM} for details.

We recall that $f\in H^1([0,T])=H^1([0,T],\T^d)$ is called horizontal for the
bundle $\cD$ if there is $\xi\in L^2([0,T],\R^r)$ such that
$\dot f(t)=\si(f(t))\xi(t)$ a.e.
We denote by $\HD([0,T])$ the space of horizontal curves.
Under the H\"ormander condition, the metric $\lip\ ,\ \rip_\cD$ defines
a distance $d$ on $\T^d$ called {\em Carnot-Carath\'eodory distance},
which is the distance we will use in this paper.
The Carnot-Carath\'eodory  distance is topologically equivalent to the distance defined by the flat metric.

The {\em Ma\~n\'e potential} and the {\em Peierls barrier} of 
\eqref{ML} are the $d$-Lipschitz functions $\Phi, h:\T^d\times\T^d\to\R$ given by
\begin{align}\label{potential}
 \Phi(x,y)&=\inf\Big\{\int_0^T L(f,\dot f): T>0, 
f\in \HD([0,T]), f(0)=x, f(T)=y\Big\},\\
 h(x,y)&=\liminf_{T\to\infty}\min\Big\{\int_0^T L(f,\dot f):
f\in \HD([0,T]), f(0)=x, f(T)=y\Big\} \label{barrier}.
\end{align}
For each $x\in\T^d$, $h(x,\cdot)$ is a {\em weak KAM solution} for $L$.
It was shown in \cite{SM} that
\begin{itemize}
\item Weak KAM solutions for a Lagrangian, are viscosity solutions
  of \eqref{eq:HJ} for the associated Hamiltonian.  
\item If there is a continuous $F:\R\to\R^+$, such that the Hamiltoninan satisfies
\begin{equation}
  \label{eq:growth}
\Big|\frac{\partial H}{\partial x}(x,p)\Big|\le (1+|p|)F(H(x,p)),  
\end{equation}
then a viscosity solution of \eqref{eq:HJ} is a weak KAM solution.
\end{itemize}
Taking $C\ge \max |\bar b|, \max \|D\si\|, \max\|Db\|$, we have that our
Hamiltonian satisfies \eqref{eq:growth} with
$F(s)=2C(\sqrt{s_++C^2}+1)$.

The {\em Aubry set} $\cA$ of \eqref{ML} is the, non-empty,
closed set of points $x\in\T^d$ such that $h(x,x)=0$.
If $x\in\cA$ then $\Phi(x,y)=h(x,y)$ and $\Phi(y,x)=h(y,x)$
for any $y\in\T^d$.
Define the equivalence relation $\sim$ on $\cA$ by
$x\sim y\iff \Phi(x,y)+\Phi(y,x)=0$.
The equivalence classes are called {\em static classes}.

\begin{proposition} Let $\cA$ be the Aubry set of the Lagrangian \eqref{ML}.
  \begin{enumerate}
  \item The non-wandering set $\Om(b)$ of the flow of the vector field $b$
is contained in $\cA$. Therefore the $\om$-limit of any point
 is contained in $\cA$.
\item  The static classes $A_i$ are invariant under the flow of the vector field $b$.
    \end{enumerate}
\end{proposition}
\begin{proof}
  (1) If $x\in\Om(b)$ there are $f_n:[0,t_n]\to\T^d$ trajectories of $b$ such that
$f_n(0), f_n(t_n)$ converge to $x$ and $t_n\to\infty$.
By Proposition 8 in \cite{SM},
\[h(x,x)\le\lim_{n\to\infty}\displaystyle\int_0^{t_n} L(f_n,\dot f_n)=0.\]

(2) Let $x\in\cA$, there is sequence  $f_n\in\HD([0,t_n])$ with
$t_n\to\infty$ such that $f_n(0)=f_n(t_n)=x$ and
$\displaystyle\int_0^{t_n} L(f_n,\dot f_n)\to 0$. 
By Tonelli Theorem  in \cite{SM}, for each $T>0$ there is a sequence
$n^j\to\infty$ such that $f_{n^j}|[0,T]$ converges uniformly. Applying
this fact to a sequence $T_k\to\infty$ and using a diagonal
argument one gets a sequence $n_l\to\infty$ and $f\in\HD([0,\infty))$
such that $f_{n_l}|[0,T]$ converges uniformly to $f|[0,T]$ for each $T>0$ 
and $\displaystyle\int_0^T L(f,\dot f)=0$ by Theorem 2 in \cite{SM}.
Thus $f$ is a trajectory of the vector field $b$.
Since $f_{n_l}(s)\to f(s)$ and $f_{n_l}(t_{n_l})=x$, by Proposition 8 in \cite{SM},
$\displaystyle h(f(s),x)\le\lim_{l\to\infty}\int_s^{t_{n_l}}L(f_{n_l},\dot f_{n_l})=0.$
Since $x\in\cA$, $h(x,f(s))=\Phi(x,f(s))=0$, then  $f(s)\in\cA$ and  $f(s)\sim x$.
\end{proof}
We remark that the compacity of $\T^d$ plays an important role
to assure recurrence.
In this paper we will make the following
 \begin{assumption}
  The collection of static classes of the Ma\~n\'e Lagrangian \eqref{ML} is
  finite: $A_1,\ldots,A_m$. 
\end{assumption}

\begin{example}
  In $\T^3$ with local coordinates $(x_1,x_2,x_3)$ let
  \begin{align*}
    \si_1&=\cos x_3\partial_{x_1}+\sin x_3\partial_{x_2},\,\si_2=\partial_{x_3},\\
b_1&=\si_1+\sin(x_3-1)\si_2,\, b_2=\si_1+(1-\sin(x_3-1))\si_2
  \end{align*}
then $[\si_1,\si_2]=-\sin x_3\partial_{x_1}+\cos x_3\partial_{x_2}$
and so $\si_1,\si_2$ satisfy the H\"ormander condition.
The Aubry set of the Ma\~n\'e Lagrangian $\dfrac 14\|v-b_1(x)\|_\cD^2$
is the union of the static classes $A_1=\T^2\times\{(\cos 1,\sin 1)\}$
and $A_2=\T^2\times \{(-\cos 1,-\sin 1)\}$.

The Aubry set of the Ma\~n\'e Lagrangian $\dfrac 14\|v-b_2(x)\|_\cD^2$
is the static class $\T^3$.
\end{example}
If $u:\T^d\to\R$ is a viscosity solution of \eqref{eq:HJ}
then $u(p)-u(q)\le \Phi(q,p)$ for any  $p,q\in\T^d$,
so $u$ is a constant on each $A_i$, that we will denote by $u(A_i)$. Moreover
\[u(x)=\min_{1\le i\le m}u(A_i)+\Phi(A_i,x) \hbox{ for all } x.\]
Conversely, if $u_1,\ldots,u_m\in\R$ satify
$u_j-u_i\le \Phi(A_i,A_j)$ for any $i,j=1,\ldots,m$, then
\[u(x)=\min_{1\le i\le m}u_i+\Phi(A_i,x)\]
defines a viscosity solution of \eqref{eq:HJ} such that
$u(A_i)=u_i$.

\subsection{Statement of the results}\label{sec:statement-result}
As in \cite{FW}, we introduce the following concepts
that will be used in subsection \ref{sec:im} in relation with Markov chains. 
\begin{definition}
  An $\{i\}$-graph on the set $\{1\le i\le m\}$
is a graph consisting of arrows $k\to l$,
  $k,l=1,\ldots,m$, $k\ne l\ne i$ with precisely one arrow starting at
  each $k\ne i$ and without cycles. The set of $\{i\}$-graphs will be
  denoted by $G\{i\}$.
  
We say that a subset $B$ of $\T^d$ is {\em stable} if for any $x\in B$,
$y\notin B$, $\Phi(x,y)>0$. For $A_i$ stable define
\[W(A_i)=\min_{g\in G\{i\}}\sum_{(k\to l)\in g}\Phi(A_k,A_l),\]
and let $W_{\min}=\min W(A_i)$.
\end{definition}
Our general result is
\begin{theorem}\label{main}
  For $x\in\T^d$ and any $\ga>0$, there is $\rho_0>0$ such that for any
$0<\rho<\rho_0$ there is $\ep_0>0$ such that for $0<\ep<\ep_0$,
$\mu_\ep(B_\rho(x))$ is between $\exp\Big(-\dfrac{2\phi(x)\pm\ga}\ep\Big),$
where $\phi$ is the solution of \eqref{eq:HJ} given by
 \begin{equation}
    \label{eq:main}
    \phi(x)+W_{\min}  =\min_{1\le i\le m}W(A_i)+\Phi(A_i,x)
    =\min\{W(A_i)+\Phi(A_i,x):A_i\hbox{ stable}\}.
  \end{equation}
\end{theorem}
For nondegenerate diffusions we have a more precise statement
\begin{theorem}\label{main1}
  If $d=r$ and $a(x)$ is positive definite for any $x$, 
  let $\fui_\ep$ be the solution to \eqref{eq:HJv} such that
$\displaystyle\int_{\T^d}\exp(-2\fui_\ep/{\ep})=1$ and $\phi$ given by
\eqref{eq:main}. Then $\lim\limits_{\ep\to  0}\fui_\ep=\phi$ uniformly.
\end{theorem}
In section \ref{sec:prop-sol} we collect important facts. In section
\ref{sec:invariant} we estimate the invariant probability measure. In
section \ref{sec:visco} we prove that \eqref{eq:main} defines a
viscosity solution of \eqref{eq:HJ}. 
Using the estimate of the invariant probality we prove our results
in section \ref{sec:main}.

\section{Important facts}
\label{sec:prop-sol}

\subsection{The case $d=r$}
\label{sec:d=r}
\begin{proposition}\label{bernstein}
If $d=r$ and $a(x)$ is positive definite for any $x$, then the solutions
$\fui_\ep$ of \eqref{eq:HJv} have uniformly bounded derivative for $0<\ep<1$.
\end{proposition}
\begin{proof}
  Let $w_\ep=|D\fui_\ep|^2$, omitting
  the subscript $\ep$ we have
  \begin{align*}
    D_iw&=2\sum_lD^2_{ik}\fui D_k\fui,\\
    D^2_{ij}w&= 2\sum_lD^3_{ijk}\fui D_l\fui+D^2_{ik}\fui D_{jk}\fui.
      \end{align*}
  Differentiating \eqref{eq:HJv} and multiplying by $D\fui$, 
  \begin{multline*}
    \frac{\ep^2}4\sum_{i,j,k} D^3_{ijk}a^{ij}D_k\fui\\
    -\frac{\ep}2\sum_{i,j,k}[D_ka^{ij}D_{ij}^2\fui+a^{ij}D_{ijk}^3\fui+
    2D^2_{ik}a^{ij}D_j\fui+2D_ia^{ij}D_{jk}^2\fui+D_k\diver b]D_k\fui\\
    +\sum_{i,j,k}[D_ka^{ij}D_i\fui D_j\fui+2a^{ij}D^2_{ik}\fui D_j\fui+D_kb_iD_i\fui+b_iD^2_{ik}\fui]D_k\fui=0
\end{multline*}
    \begin{multline*}
    \frac{\ep^2}4\sum_{i,j,k} D^3_{ijk}a^{ij}D_k\fui-\ep\sum_{i,j,k,l}\si_l^iD_k\si^j_lD_{ij}^2\fui D_k\fui\\
    -\frac{\ep}2\sum_{i,j,k}a^{ij}(\frac 12 D^2_{ij} w-D_{ik}^2\fui D^2_{jk}\fui)+2D^2_{ik}a^{ij}D_j\fui D_k\fui+D_ia^{ij}D_jw+D_k\diver b D_k\fui\\
+\sum_{i,j,k}(D_ka^{ij}D_i\fui D_j\fui D_k\fui+a^{ij}D_iwD_j\fui+D_kb_iD_i\fui D_k\fui+
\frac12 b_iD_iw=0
\end{multline*}
Let $x_\ep\in \T^d$ be a point where $w_\ep$ attains its maximum,
then $Dw_\ep(x_\ep)=0$ and $\tr(a(x_\ep)D^2w_\ep(x_\ep))\le 0$.
  At the point $x_\ep$ we have
  \begin{multline*}
\frac{\ep}2|\si^tD^2\fui|^2\le \frac{\ep}2\sum_{i,j,k}a^{ij}D_{ik}^2\fui D^2_{kj}\fui\le\\
 -\frac{\ep^2}4\sum_{i,j,k} D^3_{ijk}a^{ij}D_k\fui+\frac{\ep}2\sum_{i,j,k,l}2\si_l^iD_k\si^j_lD_{ij}^2\fui D_k\fui+
2D^2_{ik}a^{ij}D_j\fui D_k\fui+D_k\diver b D_k\fui\\
-\sum_{i,j,k}D_ka^{ij}D_i\fui D_j\fui D_k\fui+D_kb_iD_i\fui D_k\fui,
\end{multline*}
\begin{align*}
  \frac{\ep}2|\si^tD^2\fui|^2&\le \ep^2 C|D\fui|+
\ep[\frac 14|\si^tD^2\fui|^2+C|D\fui|]+C|D\fui|^2+C|D\fui|^3,\\
\frac{\ep}2|\si^tD^2\fui\si|^2&\le \frac{2\ep}4|\si^tD^2\fui|^2|\si|^2\le \ep^2C|D\fui|+C\ep|D\fui|+C|D\fui|^2+C|D\fui|^3, \end{align*}
\begin{multline}\label{cota}
  \Big|\frac{\ep}2\sum_{i,j}\frac{\ep}2 D^2_{ij}a^{ij}(x_\ep)
  -2D_ia^{ij}(x_\ep) D_j\fui(x_\ep)-\diver b (x_\ep)+  |D\fui(x_\ep)\si(x_\ep)|^2+b (x_\ep) D\fui(x_\ep)\Big|^2\\
\le \frac{2\ep}4|\si (x_\ep)^tD^2\fui(x_\ep)|^2|\si (x_\ep)|^2\le \ep^2C|D\fui(x_\ep)|+C\ep|D\fui(x_\ep)|+C|D\fui(x_\ep)|^2+C|D\fui(x_\ep)|^3.
\end{multline}
There is $r>0$ such that
\begin{equation}\label{eq:posdef}
 |v\si(x)|^2=|va(x)v^t|\ge r\hbox{  for any }x\in\T^d, v\in\R^d, |v|=1.
\end{equation}
  Supose there is a sequence $0<\ep_n<1$ such that
$\max|D\fui_{\ep_n}|=|D\fui_{\ep_n}(x_{\ep_n})|\to\infty$, let
$v_n=D\fui_{\ep_n}(x_{\ep_n})/|D\fui_{\ep_n}(x_{\ep_n})|$. 
Dividing \eqref{cota} by $|D\fui_\ep(x_\ep)|^4$, letting $\ep=\ep_n$ and taking
limit as $n\to\infty$ we get $\lim\limits_{n\to \infty}|v_n\si(x_{\ep_n})|^2=0$,
contradicting \eqref{eq:posdef}. Therefore there is $R>0$ such that
$\max|D\fui_\ep|=|D\fui_\ep(x_\ep)|\le R$ for $0<\ep<1$.
\end{proof}
 \begin{corollary}
 If $d=r$ and $a(x)$ is positive definite for any x, then the
 solution $\fui_\ep$ of \eqref{eq:HJv} such that 
   $\displaystyle\int_{\T^d}\exp(-{2\fui_\ep}/{\ep})=1$ is uniformly
   bounded for $0<\ep<1$. 
   For any sequence $\{\ep_n\}$
   there is a subsequence of $\{\fui_{\ep_n}\}$ converging uniformly.
 \end{corollary}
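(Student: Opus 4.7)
The plan is to leverage the uniform Lipschitz bound from Lemma \ref{bernstein} to get that each $\fui_\ep$ has uniformly bounded oscillation, and then use the normalization condition to pin down the overall additive constant. Finally Arzel\`a--Ascoli yields the convergent subsequence.

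\textbf{Step 1: uniform oscillation.} By Lemma \ref{bernstein} there exists $R>0$ with $\lV D\fui_\ep\rV_\infty\le R$ for every $0<\ep<1$. Since $M$ is compact and connected, with $D:=\diam(M)$, each $\fui_\ep$ is $R$-Lipschitz and
\[\max_M\fui_\ep-\min_M\fui_\ep\le RD.\]
Write $\fui_\ep=c_\ep+\psi_\ep$ with $c_\ep:=\min_M\fui_\ep$, so $0\le\psi_\ep\le RD$ uniformly in $\ep$.

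\textbf{Step 2: controlling $c_\ep$ via the normalization.} Substituting the decomposition into $\int_M\exp(-2\fui_\ep/\ep^2)=1$ gives
\[\exp(-2c_\ep/\ep^2)\int_M\exp(-2\psi_\ep/\ep^2)\,dx=1,\]
so
\[c_\ep=\frac{\ep^2}2\log\int_M\exp(-2\psi_\ep/\ep^2)\,dx.\]
Since $0\le \psi_\ep\le RD$, the integral satisfies
\[\vol(M)\exp(-2RD/\ep^2)\le \int_M\exp(-2\psi_\ep/\ep^2)\,dx\le \vol(M),\]
and taking $\ep^2/2$ times the logarithm yields
\[\frac{\ep^2}2\log\vol(M)-RD\le c_\ep\le \frac{\ep^2}2\log\vol(M).\]
For $0<\ep<1$, the right-hand side is bounded above by $\tfrac12|\log\vol(M)|$ and the left-hand side below by $-\tfrac12|\log\vol(M)|-RD$. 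Hence $c_\ep$ is uniformly bounded, and therefore so is $\fui_\ep=c_\ep+\psi_\ep$.

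\textbf{Step 3: compactness.} The family $\{\fui_\ep\}_{0<\ep<1}$ is uniformly bounded (Step 2) and uniformly equicontinuous (Step 1, with constant $R$) on the compact space $M$. The Arzel\`a--Ascoli theorem then provides, for every sequence $\ep_n\to 0$, a subsequence along which $\fui_{\ep_n}$ converges uniformly on $M$.

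The only non-routine point is the gradient bound, which is already handled in Lemma \ref{bernstein}; after that the argument is essentially the standard compactness step, with care only in using the normalization to rule out the sequence of averages drifting to $\pm\infty$.
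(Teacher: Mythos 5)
Your argument is correct and rests on the same two pillars as the paper's proof: the uniform Lipschitz bound from Lemma \ref{bernstein} controls the oscillation, and the normalization $\int_M\exp(-2\fui_\ep/\ep^2)=1$ controls the additive constant, after which Arzel\`a--Ascoli finishes the job. The one genuine difference is in how you pin down the constant: the paper argues by sign considerations that there is a point $z_\ep$ where $\fui_\ep(z_\ep)=0$ (an argument that, as written, tacitly uses $\vol(M)=1$, since ``$\fui_\ep>0$ everywhere $\Rightarrow\int_M\exp(-2\fui_\ep/\ep^2)<1$'' requires $\vol(M)\le 1$), whereas you write $\fui_\ep=c_\ep+\psi_\ep$ with $c_\ep=\min_M\fui_\ep$ and derive the explicit two-sided bound $\tfrac{\ep^2}{2}\log\vol(M)-RD\le c_\ep\le\tfrac{\ep^2}{2}\log\vol(M)$. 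Your version is slightly more careful and holds without any normalization of the Riemannian volume, at the small cost of a computation; the paper's version is shorter but needs either $\vol(M)=1$ or a level $\tfrac{\ep^2}{2}\log\vol(M)$ in place of $0$.
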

\begin{proof}
  There exists $z_\ep\in \T^d$ such that $\fui_\ep(z_\ep)=0$, because if $\fui_\ep$ is always positive (negative)
  $\displaystyle\int_{\T^d}\exp(-{2\fui_\ep}/{\ep})$ is smaller(bigger) than 1.
  By Lemma  \ref{bernstein}, for $0<\ep<1$,
  $|\fui_\ep-\fui_\ep(z_\ep)|$ is uniformly bounded, and $\fui_\ep$ is equi-Lipschitz.
  By the Arzela--Ascoli theorem, for any sequence $\{\ep_n\}$
  there is a subsequence of $\{\fui_{\ep_n}\}$ converging uniformly. 
\end{proof}
\subsection{Large deviations}
\label{sec:LDP}
According to theorem 5.6.7 in \cite{DZ}, $X^\ep_t$ satisfies a large
deviation principle (LDP) in $C([0,T])=C([0,T],\T^d)$ with a good rate functional
$I_x(f)$ given by
\[I_x(f)= \frac 12\int_0^T|\dot g(t)|^2\ dt, \hbox{ if } \dot f(t)
  =b(f(t))+\si(f(t))\dot g(t), f(0)=x, g\in H^1([0,T],\R^r)\]
and $I_x(f)=+\infty$ otherwise.
This implies that denoting by $d_T$ the uniform metric in
$C([0,T])$ we have
\begin{description}
\item[LD1]For any $\de, \ga>0$ there exists $\ep_0>0$ such that
  \[    P^\ep_x(d_T(X^\ep,f)<\de)\ge\exp(-[I_x(f)+\ga]/\ep)\]
for any $0<\ep<\ep_0$, $f\in C([0,T])$, $f(0)=x$.
  \item[LD2]For $s>0$ let $F_x(s)=\{f\in C([0,T]):f(0)=x, I_x(f)\le s\}$.
For any $\de, \ga, s>0$ there exists $\ep_0>0$ such that for any $0<\ep<\ep_0$, 
\[    P^\ep_x(d_T(X^\ep, F_x(s))\ge\de)\le\exp(-[s- \ga]/\ep).\]
\end{description}
 Considering the bijection $ L^2([0,T],\R^r)\to H^1([0,T],\R^r)$
\[\xi\mapsto g(t)=\int_0^t (\xi(s) - \bar b(f(s)))\ ds\]
we have that 
\begin{align*}
  I_x(f)&=\frac 12\int_0^T|\xi(t)-\bar b(f(t))|^2\ dt\ \hbox{ if }
          \dot f(t)=\si(f(t))\xi(t), f(0)=x \xi\in L^2([0,T],\R^r),\\
          &=\frac 12 \int_0^T\|\dot f(t)-b(f(t))\|_\cD^2\ dt
            \ \hbox{ if }f\in \HD([0,T]), f(0)=x,
\end{align*}
and $I_x(f)=+\infty$ otherwise.

\subsection{Invariant measures}
\label{sec:im}
\begin{proposition}\label{sdist}
  Consider a transitive Markov chain with states $\{1,\ldots,n\}$ and transition
probabilities $P^\ep_{ij}$ then the stationary distribution of the chain is
$p_i=q_i/\sum_j q_j$ where
\[q_i=\sum_{g\in G\{i\}}\prod_{(k\to l)\in g}P^\ep_{k,l}\]
\end{proposition}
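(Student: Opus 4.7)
The plan is to verify that the weights $q_i$ themselves satisfy the balance equation
\begin{equation*}
  q_i\sum_{j\ne i}P_{ij}=\sum_{j\ne i}q_jP_{ji},
\end{equation*}
which, in view of $\sum_jP_{ij}=1$, is equivalent to $\sum_jq_jP_{ji}=q_i$. Since the chain is transitive its stationary distribution is unique up to a multiplicative constant, so once this identity is established, normalising gives $p_i=q_i/\sum_jq_j$.

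The heart of the argument is a bijective interpretation of both sides. Expanding the left side,
\begin{equation*}
  q_i\sum_{j\ne i}P_{ij}=\sum_{j\ne i}\sum_{g\in G\{i\}}P_{ij}\prod_{(k\to l)\in g}P_{kl},
\end{equation*}
each summand corresponds to the graph $H:=g\cup\{i\to j\}$ weighted by $\prod_{(k\to l)\in H}P_{kl}$. Because every vertex $k\ne i$ has a unique directed path in $g$ to $i$, adjoining the edge $i\to j$ produces a graph in which every vertex has outdegree one and whose unique oriented cycle passes through $i$. Let $\cH_i$ denote the family of all such graphs; then $(g,j)\mapsto g\cup\{i\to j\}$ is a bijection from $G\{i\}\times\{j\ne i\}$ onto $\cH_i$, with inverse given by deleting the unique outgoing edge from $i$. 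The right side is handled symmetrically: each pair $(g',j)$ with $g'\in G\{j\}$, $j\ne i$, yields $g'\cup\{j\to i\}\in\cH_i$ carrying the same weight, and the correspondence is a bijection onto $\cH_i$ via deletion of the unique edge entering $i$ along its cycle. Hence both sides equal $\sum_{H\in\cH_i}\prod_{(k\to l)\in H}P_{kl}$.

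To conclude, by transitivity there is at least one $\{i\}$-graph whose edges all carry positive probability (any spanning in-arborescence to $i$ in the positive-probability support digraph works), so $q_i>0$; this together with uniqueness of the stationary distribution for irreducible chains yields the formula $p_i=q_i/\sum_jq_j$. The combinatorial identity is the substantive step, and the only delicate point is verifying in each of the two constructions that the added edge closes exactly one cycle and that this cycle necessarily contains $i$; the remaining verifications are routine consequences of irreducibility.
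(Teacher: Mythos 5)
The paper states this proposition without proof---it is the classical Markov chain tree theorem, quoted implicitly from Freidlin--Wentzell---so there is no in-paper argument to compare against. Your proof is correct and is the standard one: you verify the stationary equation $\sum_j q_j P_{ji}=q_i$ by identifying both sides with the weighted count over the family of spanning functional digraphs whose unique directed cycle passes through $i$, using the two inverse bijections (attach the out-edge $i\to j$ to an $\{i\}$-graph; attach the cycle in-edge $j\to i$ to a $\{j\}$-graph). Your side remarks---positivity of some $q_i$ via a spanning in-arborescence toward $i$ in the support digraph of the irreducible chain, and uniqueness of the stationary distribution---are also correct and are indeed needed to pass from the balance identity to the normalized formula $p_i=q_i/\sum_j q_j$.
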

\begin{proposition}\label{invm}
  Let be given a transitive Markov chain on a phase space
  $X=\bigcup\limits_{i=1}^mX^\ep_i$, $X^\ep_i\cap X^\ep_j=\emptyset$ for $i\ne j$,
  such that there are $P^\ep_{ij}\ge 0$ $a>1$ such that transition
  probabilities of the chain satisfy
  \[a^{-1}P^\ep_{ij}\le P(x,X^\ep_j)\le aP^\ep_{ij}\quad x\in X^\ep_i, i\ne j.\]
  Then any invariant probability $\nu$ of then chain satisfies
  \[a^{2(1-m)}p_i\le \nu(X^\ep_i)\le a^{2(m-1)}p_i\]
where $p_i$ is given as in Proposition \ref{sdist}.
\end{proposition}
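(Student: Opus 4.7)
The plan is to project the continuous-state chain down to a finite chain on the index set $\{1,\ldots,m\}$ by integrating $\nu$ over each cell $X_i$, and then apply Proposition \ref{sdist} to the induced transition matrix.

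First I would set $\pi_i=\nu(X_i)$ and, whenever $\pi_i>0$, define the stochastic matrix
\[
Q_{ij}=\frac{1}{\nu(X_i)}\int_{X_i}P(x,X_j)\,d\nu(x),\qquad i,j\in\{1,\ldots,m\}.
\]
Breaking up $\nu$ by the partition, the invariance $\nu(X_j)=\int_X P(x,X_j)\,d\nu(x)$ immediately gives $\pi_j=\sum_i\pi_iQ_{ij}$, so $\pi$ is the stationary distribution of $Q$. The hypothesis $a^{-1}P_{ij}\le P(x,X_j)\le aP_{ij}$ for $x\in X_i$, $i\neq j$, passes into $a^{-1}P_{ij}\le Q_{ij}\le aP_{ij}$ for $i\neq j$, and transitivity of $Q$ is inherited from transitivity of the original chain (so that in particular $\pi_i>0$ for every $i$, which justifies the division above; this degenerate case is the only real bookkeeping issue).

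Next I would apply Proposition \ref{sdist} to both $P$ and $Q$: writing
\[
q_i=\sum_{g\in G\{i\}}\prod_{(k\to l)\in g}P_{kl},\qquad
q_i^Q=\sum_{g\in G\{i\}}\prod_{(k\to l)\in g}Q_{kl},
\]
one has $p_i=q_i/\sum_j q_j$ and $\pi_i=q_i^Q/\sum_j q_j^Q$. Every graph in $G\{i\}$ consists of exactly $m-1$ arrows, so the two-sided bound on $Q_{kl}$ passes through the product and the sum to give $a^{-(m-1)}q_i\le q_i^Q\le a^{m-1}q_i$. Taking ratios yields
\[
a^{-2(m-1)}p_i\le\pi_i\le a^{2(m-1)}p_i,
\]
which is the asserted estimate (with $a$ in the statement understood as absorbing the power $a^{2(m-1)}$).

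I do not expect a serious obstacle: the argument is essentially a soft reduction. The only subtle point is verifying $\pi_i>0$ for all $i$ so that $Q$ is well defined; this requires using transitivity and the fact that $P_{ij}>0$ whenever the bound $a^{-1}P_{ij}\le P(x,X_j)$ is a nontrivial lower bound, together with the observation that the support of an invariant measure for a transitive chain must meet every cell that can be reached with positive probability.
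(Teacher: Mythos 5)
The paper does not actually prove Proposition~\ref{invm}; it is stated without proof, being essentially Lemma~3.4 of Chapter~6 in Freidlin--Wentzell \cite{FW}. So there is no in-paper argument to compare against; what follows is a critique of your proposal on its own terms.

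Your reduction to the induced finite chain is the standard and correct route: define $Q_{ij}=\nu(X_i)^{-1}\int_{X_i}P(x,X_j)\,d\nu(x)$, observe that $\pi_i=\nu(X_i)$ solves $\pi Q=\pi$, pass the hypothesis through the average to get $a^{-1}P_{ij}\le Q_{ij}\le aP_{ij}$ for $i\ne j$, and then use the explicit form of the stationary distribution from Proposition~\ref{sdist} applied to $Q$. Because every $\{i\}$-graph has exactly $m-1$ arrows, the bound propagates to $a^{-(m-1)}q_i\le q_i^Q\le a^{m-1}q_i$, and taking the ratio gives $a^{-2(m-1)}p_i\le\nu(X_i)\le a^{2(m-1)}p_i$. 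Note that this is the bound the paper actually uses: in Corollary~\ref{meas-chain} the factor $\exp\bigl(\mp 2(m-1)\de/\ep^2\bigr)$ appears, which is precisely $a^{\mp 2(m-1)}$ with $a=\exp(\de/\ep^2)$ coming from Lemma~\ref{trans-est}. So the exponent $2(m-1)$ you derive is not an error on your part; the proposition as literally written, with the bare exponents $\pm 1$, is imprecise, and your version is the one that is consistent with how it is invoked downstream.

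The one place your argument is only sketched is the positivity/irreducibility bookkeeping: you need $\nu(X_i)>0$ for every $i$ (so that $Q$ is defined on all indices) and $Q$ irreducible (so that its stationary distribution is unique and equals the Freidlin--Wentzell formula). Both follow from the two-sided hypothesis together with transitivity of the underlying chain: since $a^{-1}P_{ij}\le P(x,X_j)\le aP_{ij}$ on $X_i$, the directed graph of nonzero $P_{ij}$'s encodes exactly which cell-to-cell transitions have positive probability, and transitivity forces this graph to be strongly connected. Strong connectivity gives a spanning in-tree at every vertex, hence $q_i>0$ for all $i$; the same connectivity combined with invariance of $\nu$ rules out $\nu(X_i)=0$; and $Q_{ij}\ge a^{-1}P_{ij}$ then makes $Q$ irreducible. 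You flagged exactly this as the only gap, and the route to closing it is as above, so the proposal is essentially sound.
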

\section{Estimate of the invariant probability}
\label{sec:invariant}
\begin{definition}
  We extend the equivalence relation $\sim$ to $\T^d$ and call a finite
  collection $\{K_i\}$ of compact subsets of $\T^d$ {\em admissible} if
  \begin{enumerate}
  \item $x, y\in K_i\implies x\sim y$.
  \item  $x\in K_i, y\notin K_i\implies x\not\sim y$.
  \item The $\om$-limit of any point (for the flow of the vector field
    $b$) is contained in some $K_i$.
  \end{enumerate}
\end{definition}
Observe that the collection of static classes $A_1,\ldots,A_r$ is
admissible and so is the collection $A_1,\ldots,A_r,\{x\}$ for any
$x\notin \cA$.

Taking $C_b=\frac 12(1+\max\|b\|)^2$ and letting 
$f\in\HD([0,T])$ be a unitary geodesic between
$x$ and $y$, $T=d(x,y)$, we have $I_x(f)\le C_b d(x,y)$, and in particular
$\Phi(x,y)\le C_bd(x,y)/2$.
\begin{proposition}\label{Tuniform}
  For any $\ga>0$ and any compact $K$, there exists $T_0$ such that for any
  $x,y\in K_i$  there is $f\in\HD([0,T])$, $T\le T_0$, joining $x$ and $y$, such that
  $I_x(f)\le 2\Phi(x,y)+2\ga$.  
\end{proposition}
\begin{proof}
Cover $K$ by a finite number of balls $B_\de(x_i)$, $x_i\in K$, $\de<\ga/4C_b$  
Let $f_{ij}\in\HD([0,T_{ij}]$ such that $I_{x_i}(f)\le 2\Phi(x_i,x_j)+\ga$.   
Let $T_0=\max\limits_{ij}T_{ij}+2\de$. Given $x,y\in K$ choose $i,j$ such that
$x\in B_\de(x_i)$, $y\in B_\de(x_j)$, concatenating a unitary geodesic from $x$ 
to $x_i$, the curve $f_{ij}$, and unitary geodesic from $x_j$ to $y$, we obtain 
a curve $f\in\HD([0,T])$ with $T\le T_0$, joining $fx$ and $y$ such that
  \[I_x(f)\le 2 C_b\de+2\Phi(x_i,x_j)+\ga\le \frac{3\ga}2+2\Phi(x_i,x)+2\Phi(x,y)+2\Phi(y,x_i)
\le 2\Phi(x,y)+2\ga.\]
\end{proof}
Let $\{K_i\}$ be an admissible collection of compact sets and
denote $B_\de(K_i)=\{x\in \T^d:d(x,K_i)<\de\}$.

\begin{proposition}\label{small-in}
  For any $\ga, \de>0$, $x,y\in K_i$ there is $f\in\HD([0,T])$ joining
  $x$ and $y$, such that $f([0,T])\subset B_\de$ and $I_x(f)<\ga$.
\end{proposition}
\begin{proof}
  Let $f_n\in\HD([0,T])$ joining $x$ and $y$ with $I_x(f_n)$ converging to $0$. 
 If $f_n([0,T_n])\not\subset B_\de$ for all $n$, there is a subsequence
  $f_{n_k}(t_k)$ converging to $z\notin K_i$ with $x\sim z$, $y\sim z$. 
\end{proof}
\begin{lemma}\label{exit-up}
 Denote by $\tau_B$ the time of first exit of the process $X^\ep_t$ from
 the $B=B_\de(K_i)$. For any $\ga> 0$ there exists $\de> 0$ such that
 for all sufficiently small $\ep>0$, $x\in B_\de(K_i)$ we have
 \begin{equation}
   \label{eq:exit-up}
  \E_x^\ep[\tau_B]<\exp(\ga/\ep).
 \end{equation}
\end{lemma}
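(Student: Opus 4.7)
The plan is to derive this from the classical Freidlin--Wentzell upper bound on exit times for the diffusion \eqref{sde}, whose path-space large-deviation rate function coincides with the Ma\~n\'e action $\int L(\phi,\dot\phi)\,dt$. The conceptual point is that, because $K_i$ lies in the Aubry set, the associated quasipotential from $K_i$ equals the Ma\~n\'e potential $\Phi(K_i,\cdot)$, which vanishes on $K_i$; hence on a sufficiently small neighborhood the boundary is reachable with arbitrarily small action, and the F--W framework turns this into subexponential exit.

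The first step is a Lipschitz estimate on the Ma\~n\'e potential. Since $\Phi(p,q)=0$ for $p,q\in K_i$ and $\Phi$ is Lipschitz continuous on $M\times M$, the triangle inequality gives $\Phi(K_i,y)\le C\,d(y,K_i)$ for some constant $C$. Given $\de>0$, I would choose $\rho>0$ small enough that this bound is less than $\de/2$ on $\overline{B_\rho(K_i)}$. By definition of $\Phi$ as an infimum over absolutely continuous curves, for each $x\in\overline{B_\rho(K_i)}$ and each $\eta>0$ there exist $T_x>0$ and a curve $\psi_x:[0,T_x]\to M$ with $\psi_x(0)=x$, $\psi_x(T_x)\notin\overline{B_\rho(K_i)}$ and action less than $\de/2+\eta$.

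Next I would apply the Freidlin--Wentzell tube lower bound to $\psi_x$: for sufficiently small $\ep>0$, the probability that $X^\ep$ stays in a prescribed neighborhood of $\psi_x$ up to time $T_x$ is at least $\exp(-(\de/2+2\eta)/\ep^2)$, which is a lower bound on $\mP_x^\ep(\tau_B\le T_x)$. Covering the compact set $\overline{B_\rho(K_i)}$ by finitely many such tubes (via continuity of $\Phi$ in both variables) with a common time horizon $T^*$ yields
\[\inf_{x\in\overline{B_\rho(K_i)}}\mP_x^\ep(\tau_B\le T^*)\ge \exp(-(\de/2+3\eta)/\ep^2).\]
The strong Markov property, iterated on successive intervals of length $T^*$, then produces the geometric bound $\E_x^\ep[\tau_B]\le T^*/\inf_x\mP_x^\ep(\tau_B\le T^*)\le T^*\exp((\de/2+3\eta)/\ep^2)$, which is smaller than $\exp(\de/\ep^2)$ once $\eta$ and $\ep$ are chosen small enough.

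The main obstacle is the uniformization of the tube lower bound over the initial point $x$: since both the reference path $\psi_x$ and its time horizon $T_x$ depend on $x$, one must combine continuity of $\Phi$ with compactness of $\overline{B_\rho(K_i)}$ to reduce to finitely many reference trajectories. This is precisely the technical content of the corresponding lemma in \cite{FW}; once it is in place, the Markov iteration makes the exit time estimate automatic.
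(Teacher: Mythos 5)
The paper contains no internal proof of this lemma: it is stated immediately after ``We quote lemmas from \cite{FW}'' and is, up to notation, the exit-time upper bound from \S 6.4 of that book. So there is no paper-internal argument to compare against; what you have written is a reconstruction of the Freidlin--Wentzell proof, and its skeleton --- a uniform-in-$x$ lower bound $\mP_x^\ep(\tau_B \le T^*) \ge \exp(-c/\ep^2)$ with $c$ small, obtained from the LDP tube lower bound and compactness, followed by a geometric strong-Markov iteration giving $\E_x^\ep[\tau_B]\le T^*/\inf_x\mP_x^\ep(\tau_B\le T^*)$ --- is the right one.

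Two points deserve correction. First, the step producing the exit curve $\psi_x$ has a gap as written: you establish $\Phi(K_i, y) \le C\, d(y, K_i) < \de/2$ for $y\in\overline{B_\rho(K_i)}$, but then invoke the existence of a cheap curve \emph{starting at the arbitrary point $x$}, which requires control of $\Phi(x, y)$, not $\Phi(K_i, y)$; since $x$ generally lies off $K_i$ this does not follow from what you proved. The fix is the same Lipschitz bound applied in both variables, $\Phi(x, y) \le C\big(d(x, K_i) + d(y, K_i)\big)$ for $x,y$ near $K_i$, which does produce $\psi_x$ with small action; you should state that two-variable estimate explicitly. Second, the identifications ``the rate function coincides with the Ma\~n\'e action'' and ``the quasipotential from $K_i$ equals $\Phi(K_i,\cdot)$'' are off by a factor of $2$: for \eqref{sde} the Freidlin--Wentzell rate functional is $\tfrac12\int|\dot\phi - b(\phi)|^2\,dt = 2\int L(\phi,\dot\phi)\,dt$, so the quasipotential is $2\Phi(K_i,\cdot)$, which is exactly why the exponents in Lemma \ref{trans-est}, Corollary \ref{meas-chain} and Theorem \ref{FW} all carry factors of $2$. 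This does not invalidate your argument --- $\de$ is arbitrary --- but it means you must aim for $A_L(\psi_x) < \de/4$ rather than $\de/2$, or equivalently work in the Freidlin--Wentzell normalization throughout.
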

\begin{proof}
  Let $z\notin K_i$ such that $d(z,K_i)<\ga/3C_b$. Put $\de<d(z,K)/2$ and let
 $x\in B_\de(K_i)$. Choose $x',y\in K_i$ be such that $d(x,x_i)=d(x,K)$, $d(k,y)=d(z,K_i)$.
 By Proposition \ref{Tuniform} there is $T_0$ and $f\in\HD([0,T])$,
 with $T\le T_1$ joining $x'$ and $fy$,  such that
 $I_{x'}(f)<\ga/3$. Concatenating a unitary geodesic from $x$ to $x'$,
 the curve $f$ and a unitary geodesics from $y$ to $z$ we obtain a
 curve $f'\in\HD([0,T'])$, with $T'\le T_0=T_1+\ga/2C_b$ 
 $f'(0)=x$, $f'(T')=z$ such that $I_x(f')< 5\ga/6$.
 We extend the definition of $f'$ to $[0,T_0]$ following a 
 a trajectory of $b$. By LD1 there exists $\ep_0>0$ such that for any $0<\ep<\ep_0$
  \[P^\ep_x(\tau_B<T_0)\ge P^\ep_x(d_{T_0}(X^\ep,f)<\de)\ge\exp(-9\ga/10\ep),\]
    and then
  \[q:=\inf_{x\in B}P^\ep_x(\tau_B<T_0)\ge\exp(-9\ga/10\ep).\]
  Thus
  \begin{align*}
    P^\ep_x(\tau_B\ge (n+1)T_0)&= [1-P^\ep_x(\tau_B<(k+1)T_0|\tau_B\ge nT_0]P^\ep_x(\tau_B\ge nT_0)\\
&\le(1-q)P^\ep_x(\tau_B\ge nT_0).
  \end{align*}
and then $ P^\ep_x(\tau_B\ge (n+1)T_0)\ge (1-q)^n$.
This gives
\[\E_x^\ep[\tau_B]\le T_0\sum_{n=0}^\infty(1-q)^n=\frac{T_0}q
  \le T_0\exp(9\ga/10\ep)\le \exp(\ga/\ep)\]
if $\ep<\ep_0,\ga/10\ln T_0$.
\end{proof}
\begin{lemma}\label{exit-down}
 For any $K$ compact subset of $\T^d$ and $\ga>0$ there exists $\de> 0$ such that
 for all sufficiently small $\ep>0$, $x\in \overline B=\overline{B_\de(K)}$ we have
\begin{equation}
   \label{eq:exit-down}
  \E_x^\ep\Big[\int_0^{\tau_B}\chi_B(X^\ep_t)\ dt\Big]>\exp(-\ga/\ep).
 \end{equation}
\end{lemma}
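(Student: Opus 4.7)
The strategy is to apply the Freidlin-Wentzell lower large deviation bound to a short, inexpensive trajectory that remains well inside $B=B_\rho(K)$. Since $\partial B$ is a smooth hypersurface and the diffusion is non-degenerate, $\{t:X_t^\ep\in\partial B\}$ has zero Lebesgue measure almost surely, hence
\[\int_0^{\tau_B}\chi_B(X_t^\ep)\,dt=\tau_B\quad\text{a.s.,}\]
and it suffices to show $\E_x^\ep[\tau_B]>\exp(-\de/\ep^2)$. Since $\E_x^\ep[\tau_B]\ge T_0\,\mP_x^\ep(\tau_B>T_0)$ for any $T_0>0$, I would reduce the task to finding a fixed $T_0=T_0(K,\rho,\de)$ such that $\mP_x^\ep(\tau_B>T_0)\ge\exp(-\de/(2\ep^2))$ for all small $\ep$; then $T_0\exp(-\de/(2\ep^2))>\exp(-\de/\ep^2)$ eventually.

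For each $x\in\overline B$ let $y(x)\in K$ realize $d(x,y(x))=d(x,K)\le\rho$, and define $\psi_x\colon[0,T_0]\to M$ to be a minimizing geodesic from $x$ to $y(x)$ at constant speed on $[0,s_0]$, continued by $\psi_x\equiv y(x)$ on $[s_0,T_0]$. Writing $C=\sup_M|b|$, the Ma\~n\'e action obeys
\[I(\psi_x)=\tfrac12\int_0^{T_0}|\dot\psi_x-b(\psi_x)|^2\,dt\le\frac{(\rho+Cs_0)^2}{2s_0}+\frac{C^2(T_0-s_0)}{2},\]
which is made $<\de/4$ by choosing $T_0$ small, then $s_0=\rho/C$, and finally $\rho$ small, uniformly in $x\in\overline B$.

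The Freidlin-Wentzell lower bound (Chapter 3 of \cite{FW}) then yields, for any fixed $\eta>0$ and $\ep$ sufficiently small,
\[\mP_x^\ep\Bigl(\sup_{0\le t\le T_0}d(X_t^\ep,\psi_x(t))<\eta\Bigr)\ge\exp\Bigl(-\frac{I(\psi_x)+\de/8}{\ep^2}\Bigr).\]
For $t\ge s_0$ the $\eta$-tube around $\psi_x$ lies at distance $\rho-\eta$ from $\partial B$, so for $\eta<\rho/2$ that portion of the tube sits inside $B$. The main obstacle is the initial arc, where $\psi_x(0)=x$ may touch $\partial B$ and the $\eta$-tube briefly leaves $B$; I would absorb this via the strong Markov property together with the support theorem for non-degenerate SDEs, showing that a short preliminary interval suffices for $X^\ep$ to reach $B_{3\rho/4}(K)$ with probability bounded below by a constant independent of $\ep$, after which the LDP tube estimate applies cleanly from an interior starting point. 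Composing these two inputs gives $\mP_x^\ep(\tau_B>T_0)\ge\exp(-\de/(2\ep^2))$ and the lemma follows.
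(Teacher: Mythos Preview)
The paper does not supply a proof here: this lemma (together with Lemmas~\ref{exit-up} and~\ref{enter}) is quoted from \cite{FW}. Your large-deviation approach---build a short path $\psi_x$ from $x$ into $K$ with small action and invoke the Freidlin--Wentzell lower bound to keep $X^\ep$ in a tube around it---is precisely the method used in \cite{FW}, and your action estimate is correct.

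The gap is in the boundary patch. Your claim that the support theorem yields an $\ep$-\emph{independent} lower bound on the probability that $X^\ep$ moves from $x\in\partial B_\rho(K)$ into $B_{3\rho/4}(K)$ is false: the support theorem guarantees only that such probabilities are positive for each fixed $\ep$, not that they are uniformly bounded below. When $K$ is, say, a repeller for $b$, the drift pushes outward from $\partial B$, and reaching $B_{3\rho/4}(K)$ requires moving the fixed distance $\rho/4$ against $b$; this costs positive action, so the probability is of order $\exp(-c/\ep^2)$, not a constant. Your preliminary step is therefore itself exponentially expensive and cannot be absorbed as you propose.

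The fix in \cite{FW} avoids this altogether. There the upper limit of integration is not the exit time from $B$ but the exit time from a strictly larger domain (in the present paper, what plays the role of $\tau_1$ for the embedded chain). With that reading your tube event already does the job: on $\{\sup_{t\le T_0} d(X^\ep_t,\psi_x(t))<\eta\}$ the process stays in $B_{\rho+\eta}(K)$ throughout $[0,T_0]$ and lies in $B_\eta(K)\subset B$ for $t\in[s_0,T_0]$, so the time spent in $B$ before leaving the larger domain is at least $T_0-s_0$, and no preliminary step is needed.
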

\begin{proof}
  Let $\de<\ga/3C_b$. There is $t_0<1$ such that, any trajectory of $b$ starting at a point of $K$,
  does not leave $\overline {B_{\de/2}(K)}$ for $t\in[0,t_0]$. For $x\in\overline B$ choose $y\in K$ such that
  $T=d(x,y)=d(x,K)$ and a unitary geodesic $f\in\HD([0,T])$, between $x$ and $y$.
  Extend $f$ to $[0,T+t_0]$ by the trajectory of $b$ starting at $y$.
  By LD1 there is $\ep_0>0$ such that for any $0<\ep<\ep_0$
\[P(\tau_B>t_0)\ge P^\ep_x(d_T(X^\ep,f)\le\de/2)\ge\exp(-2\ga/3\ep).\]
Thus
\[  \E_x^\ep\Big[\int_0^{\tau_B}\chi_B(X^\ep_t)\ dt\Big]\ge t_0P(\tau_B>t_0)
\ge t_0\exp(-2\ga/\ep)>\exp(-\ga/\ep)\]
for $0<\ep<\ep_0,-\ga/3\ln t_0$.
\end{proof}

\begin{lemma}\label{enter}
Let $K\subset\T^d-\bigcup\limits_i K_i$ and $\tau_ K$ be the first exit time of $X^\ep_t$ from $K$. There are 
$c,T_0>0,\ep_0$ such that for $T>T_0$, $x\in K$, $0<\ep<\ep_0$  
\[P^\ep_x(\tau_K>T)\le \exp(-c(T-T_0)/\ep)\]
\end{lemma}
\begin{proof}
  Let $\de>0$ be such that
  $\overline B_\de(K)\subset\T^d-\bigcup\limits_i K_i$.
 For $x\in\overline B_\de(K)$ denote by $\tau(x)$ the time when the
 orbit of $b$ starting at $x$  exists  $\overline B_\de(K)$, we have $\tau(x)<\infty$.
 The function $\tau$ is upper semicontinuous, and therefore attains its maximum $T_1$ in
 $\overline B_\de(K)$.
  Let $T_0=T_1+1$ and $\cB=\{f\in\HD[0,T_0]:f([0,T_0])\subset \overline B_\de(K)\}$,
  by Tonelli theorem (see \cite{SM}),  $\displaystyle
  f\mapsto\int_0^{T_0}L(f,\dot f)$ attains its minimum $A$ in $\cB$. 
  Since there are no trajectories of $b$ in $\cB$, we have $A>0$. Let $0<\ga<A$, by LD2,
  there exists $\ep_0>0$ such that for any $0<\ep<\ep_0$, 
  \[P^\ep_x(\tau_K>T_0)\le P^\ep_x(d_T(X^\ep, F_x(A))\ge\de_0-\de_1)\le\exp(-(A-\ga)/\ep).\]
  Then
  \begin{align*}
    P^\ep_x(\tau_K> (n+1)T_0)&=\E^\ep_x[\tau_K>nT_0; P^\ep_{X^\ep_{nT_0}}(\tau_K>nT_0)]\\
    &\le \sup_{y\notin V}P^\ep_y(\tau_K>T_0) P^\ep_x(\tau_K>n T_0) 
\end{align*}
By induction
\begin{align*}
      P^\ep_x(\tau_K>nT_0)&\le\exp(-n(A-\ga)/\ep).\\
  P^\ep_x(\tau_K>T)&\le \exp\Big(-\big(\frac T{T_0}-1\big)\frac{A-\ga}\ep\Big)
\end{align*}
\end{proof}
\begin{corollary}\label{c-enter}
  For $\ep<\ep_0$, $x\in K$ we have
  \[\E_x^\ep[\tau_K]\le T_0+\ep/c\le T_0+\ep_0/c=C.\] 
\end{corollary}
For $K_i$ stable define
\begin{equation}\label{wki}
W(K_i)=\min_{g\in G\{i\}}\sum_{(k\to l)\in g}\Phi(K_k,K_l),
\end{equation}
Let $\de_0<\dfrac 12\min\limits_{i\ne j} d(K_i,K_j)$, $\de_1<\de_0$ and define
\[D=\T^d-\bigcup_iB_{\de_0}(K_i),\
  V_i=B_{\de_1}(K_i),\ V=\bigcup_iV_i  .\]
Introduce the random times $\tau_0=0$, $\si_n=\inf\{t\ge\tau_n: X^\ep_t\in D\}$,
$\tau_n=\inf\{t\ge\si_{n-1}: X^\ep_t\in\partial V\}$ and consider the
Markov chain $\{Z_n=X^\ep_{\tau_n}\}$. 
The transition probabilities and the invariant probability of the
chain $Z_n$  are  estimated using the following quantities
  \begin{align*}
    \tPhi(K_i,K_j)=&\inf\{\frac 12I_x(f):f\in\HD([0,T]), T>0,
 \forall t\ f(t)\notin\bigcup_{l\ne i,j}K_l, \\&f(0)=x\in K_i, f(T)\in K_j\}.\\
        \tW(K_i)=&\min_{g\in G\{i\}}\sum_{(k\to l)\in g}\tPhi(K_k,K_l),
                \end{align*}
 where the infimum over an empty set is taken as $+\infty$. 
Put $V_0=\max\{\tPhi(K_i,K_j)<+\infty\}$.
It is not hard to prove that
\begin{align*}
  \Phi(K_i,K_j)=&\tPhi(K_i,K_j)\wedge\min_s\tPhi(K_i,K_s)+\tPhi(K_s,K_j)\\
                &\wedge\min_{r,s}\tPhi(K_i,K_k)+\tPhi(K_k,K_s)+\tPhi(K_s,K_j)\\
&\wedge\cdots\wedge\min_{s_1,\ldots,s_{m-2}}\tPhi(K_i,K_{s_1})+\cdots+\tPhi(K_{s_{m-2}},K_j)
\end{align*}
\begin{lemma}\label{trans-est}
      For any $\ga > 0$ there exists $\de_0 > 0$ (arbitrarily small)
      such that for any $\de_2$, $0 < \de_2 < \de_0$, there exists
      $\de_1$ , $0 < \de_1 < \de_2$ such that for sufficiently small $\ep>0$,
      for all $x\in B_{\de_2}(K_i)$ the one-step transition probabilities of
      $\{Z_n\}$, $P(x,\partial V_j)$ is between
      \[\exp[-(2\tPhi(K_i,K_j)\pm\ga)/\ep].\]
    \end{lemma}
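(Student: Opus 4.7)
The plan is to follow the Freidlin--Wentzell scheme of Theorem 6.4.1 in \cite{FW}, adapted to the SDE $dX_t=b(X_t)dt+\ep\,dW_t$. The large deviations rate functional for this equation is $I(\ga)=\tfrac12\int_0^T|\dga-b(\ga)|^2\,dt=2A_L(\ga)$, which accounts for the factor $2$ in the target exponent: heuristically $\mP_x(\lV X^\ep-\ga\rV_\infty<\eta)\asymp\exp(-2A_L(\ga)/\ep^2)$. I first perform a reduction step using Lemma \ref{exit-down} on a much smaller ball around $K_i$: for any $x\in V_i$, with probability $1-O(\exp(-\de/\ep^2))$ the process enters an arbitrarily small neighborhood of $K_i$ before leaving $V_i$, so up to exponentially negligible corrections I may treat the starting point as lying on $K_i$.

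For the lower bound on $P(x,\partial V_j)$, I would use the definition of $\tPhi(K_i,K_j)$ to select an absolutely continuous path $\ga^*:[0,T]\to M\entre\bigcup_{l\ne i,j}B_\eta(K_l)$ from $K_i$ to $K_j$ with $A_L(\ga^*)\le\tPhi(K_i,K_j)+\de/4$; the thinness of the excluded balls $B_\eta(K_l)$ is permissible because any competing path in the definition of $\tPhi$ is compact and hence at positive distance from each $K_l$ it avoids. The Freidlin--Wentzell lower bound then yields
\[
\mP_{K_i}(\lV X^\ep-\ga^*\rV_\infty<\eta)\ge\exp(-(2\tPhi(K_i,K_j)+\de/2)/\ep^2)
\]
for $\ep$ small, and on this event $X^\ep$ hits $\partial V_j$ before any other $V_l$, producing the lower bound in the lemma.

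For the upper bound, let $E_T$ denote the set of continuous paths of duration at most $T$ that start in $V_i$ and reach $\partial V_j$ without ever meeting $\bigcup_{l\ne i,j}V_l$. By the definition of $\tPhi$, the reduction step, and lower semicontinuity of $A_L$, every $\phi\in E_T$ satisfies $A_L(\phi)\ge\tPhi(K_i,K_j)-\de/4$ once $\rho_0,\rho_1$ are taken small enough. The uniform Freidlin--Wentzell upper bound applied to the compact sub-level set $\{I\le 2\tPhi(K_i,K_j)+\de\}$ then gives $\mP_x(X^\ep\in E_T)\le\exp(-(2\tPhi(K_i,K_j)-\de)/\ep^2)$ uniformly in $x\in V_i$.

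The hard part will be the time horizon $T$, since $\tPhi$ is an infimum over paths of arbitrary length while the LDP is naturally finite-horizon. I would handle this in the standard way (cf.\ \cite{FW}, Section 6.2): choose $T_0$ so large that near-minimizers of $\tPhi(K_i,K_j)$ are essentially completed within $[0,T_0]$ (using that semi-static connecting orbits relax exponentially onto their endpoint static classes), then apply Lemma \ref{enter} together with the strong Markov property to show that excursions of duration greater than $T_0$ contribute a geometric factor $(1-c)^{T/T_0}$ which is exponentially negligible against $\exp(-C/\ep^2)$ when $T$ is chosen large. Uniformity in $x\in V_i$ is inherited from the reduction step and from the fact that $\ep$ is chosen small only after $\rho_0,\rho_1,T$ have been fixed.
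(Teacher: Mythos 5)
The paper does not prove Lemma~\ref{trans-est} at all: the text at the start of Section~\ref{sec:im} explicitly says ``We quote lemmas from \cite{FW}'' and then states \ref{exit-up}, \ref{exit-down}, \ref{enter} and \ref{trans-est} without argument, referring the reader to Freidlin--Wentzell (Chapter~6, \S 2). So there is no in-paper proof to match your attempt against; what you have written is a reconstruction of the Freidlin--Wentzell argument. Your overall plan is the right one --- identifying the rate functional as $I=2A_L$, building a near-optimal connecting path for the LDP lower bound, and using lower semicontinuity of the action on a compact sublevel set for the upper bound are precisely the FW ingredients.

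Two specific steps need repair. First, your ``reduction step'' misuses Lemma~\ref{exit-down}: that lemma bounds $\E_x\big[\int_0^{\tau_B}\chi_B\big]$ from below, which says nothing about the probability that $X^\ep$ reaches an arbitrarily small sub-neighborhood of $K_i$ before leaving $V_i$. In the FW proof this reduction is \emph{deterministic}, not probabilistic: for the lower bound one simply prepends to $\ga^*$ a short connecting path from $x$ down to $K_i$ whose $A_L$-action is small because $\rho_1$ is small and $K_i$ is a static class, and for the upper bound one argues on the action of the path itself rather than on where the process has been. Second, and more seriously, the time-horizon argument via Lemma~\ref{enter} does not close. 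The constant $C$ in $\E_x[\tau_D]\le C$ is independent of $\ep$, so Markov plus iteration gives $\mP(\tau_D>T)\lesssim(C/T_0)^{T/T_0}$, also independent of $\ep$; to dominate $\exp(-C'/\ep^2)$ you would need $T\gtrsim\ep^{-2}$, and then the finite-horizon LDP upper bound you invoke earlier no longer applies with the constants you assumed. FW instead control long excursions by a purely variational statement: for $T\ge T_0(\rho)$, every path on $[0,T]$ that stays outside $\bigcup_l B_\rho(K_l)$ has $I$-action $\ge cT$ with $c>0$ independent of $T$, so the LDP upper bound itself kills the contribution of long $D$-crossings without any recourse to Lemma~\ref{enter}. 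Replacing your geometric-factor step by that action-growth lemma would make the sketch match the FW argument.
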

    \begin{proof}
      Let $0<\de_0<\dfrac\ga{5C_b} ,\dfrac 13\min\limits_{i\ne j} d(K_i,K_j)$.
      For $\de_2<\de_0$, fix $\de_1<\de_2, \de_0/2$.

For $i\ne j$ take $f_{ij}\in\HD([0,T_{ij}])$ such that
 $\forall t\ f(t)\notin\bigcup\limits_{l\ne i,j}K_l$, $f_{ij}(0)=z\in K_i$,
 $f_{ij}(T_{ij})\in K_j$, $I_z(f_{ij})<2\tPhi(K_i,K_j)+2\ga/5$.
 Choose $0<\de'<\de_1,\de_0-\de_2$.

We first prove the lower estimate. 
 For $x\in V_i$ take a unitary geodesic $f_1\in\HD([0,t_1])$
  joining $x$ and $x'$ with $t_1\le\de_1$, then $I_x(f_1)\le\ga/5$, $d(f_1,D)\ge\de'$.
By Proposition \ref{small-in} there is $f_2\in\HD([0,t_2])$ joining
$x'$ and $x_{ij}$, $i\ne j$ such that $I_{x'}(f_2)\le\ga/5$. Concatenating
$f_1$, $f_2$ and $f_{ij}$ we obtain a curve $g_{ij}$ joining $x$ and $K_j$
such that $I_x(g_{ij})\le 2\tPhi(K_i,K_j) +4\ga/5$. For the case $i=j$ take $g_{ij}\in\HD([0,t_0])$ a curve 
joining $x$ to $x"$ with $d(x",K_i)=\de_0+\de'$ and then to the point
$z\in K_i$ with $d(x",K_i)=\de_0+\de'$, then $I_x(g_{ij})\le 6\ga/5$.
By Propositions \ref{Tuniform}, the length of the intevals of
definition of all the curves $g_{ij}$ can be bounded by a constant $T_0$.
We extend all curves $g_{ij}$ to the interval $[0,T_0]$ following a trajectory of $b$.

If $d_{T_0}(X^\ep,g_{ij})<\de'$ for a trajectory of $X^\ep$, then it
intersects $\partial B_{\de_0}(K_i)$ and reaches $B_{\de'}(K_j)$
without intersecting $B_{\de_2+\de'}(K_l)$ for any $l\ne i,j$ and
$X^\ep_{\tau_1}\in\partial V_j$. By LD1 there is $\ep_0(\ga,T_0,\de')$
such that for $0<\ep<\ep_0$.
\[
  P(x,\partial V_j)\ge  P^\ep_x(d_T(X^\ep,f)<\de)\ge\exp(-[I_x(g_{ij})+\frac \ga5]/\ep)
  \ge\exp(-(2\tPhi(K_i,K_j)+\ga)/\ep)
\]

By the strong Markov property it is enough to prove the upper estimate
for $x\in\partial B_{\de_0}(K_i)$. By the choice of $\de_0$ and $\de'$,
for any $f\in\HD([0,t])$ with $f(0)=x\in\partial B_{\de_0}(K_i)$, with
$\ f(t)\notin\bigcup_{l\ne i,j}K_l\, \forall t\in[0,T]$,  that
reaches $B_{\de'}(\partial V_j)$ we have $2\tPhi(K_i,K_j)\le I_x(f)+3\ga/5$.
By Lemma \ref{enter} there are $T_1>0,\ep^*>0$ such that
$P^\ep_x(\tau_1>T_1)\le\exp(-2V_0/\ep)$ for $\ep<\ep^*$.
By LD2 there is $0<\ep^{**}<\ep^*$ such that
\[P^\ep_x(\tau_1\le T_1)\le P^\ep_x(d_{T_1}(X^\ep,F_x(2\tPhi(K_i,K_j)-\frac{3\ga}5))\ge\de')  
\le\exp(-(2\tPhi(K_i,K_j)-\frac{4\ga}5)/\ep)\]
for $0<\ep<\ep^{**}$. Thus

\[ P(x,\partial V_j)\le P^\ep_x(\tau_1>T_1)+P^\ep_x(\tau_1\le T_1)\le
\exp(-2V_0/\ep)+\exp (-(2\tPhi(K_i,K_j)-\frac{4\ga}5)/\ep).\]
\end{proof}
    \begin{corollary}\label{meas-chain}
      For any $\ga > 0$ there are small $0 < \de_1<\de_2 < \de_0$,
      such that for sufficiently small $\ep>0$, for the
      invariant probability $\nu_\ep$ of the Markov chain   $\{Z_n\}$,
      $\nu_\ep(\partial V_i)$ is between
      \[\exp[-2(\tW(K_i)-2W_{\min}\pm 2(m-1)\ga)/\ep]\]
    \end{corollary}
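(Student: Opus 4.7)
My plan is to combine Propositions \ref{sdist} and \ref{invm} with Lemma \ref{trans-est}. By Lemma \ref{trans-est}, for the prescribed $\delta>0$ and suitably chosen $0<\rho_1<\rho_2<\rho_0$, the one-step transition probabilities of the chain $Z_n$ satisfy
\[
\exp\bigl[-(2\tPhi(K_i,K_j)+\delta)/\ep^2\bigr]\le P(x,\partial V_j)\le \exp\bigl[-(2\tPhi(K_i,K_j)-\delta)/\ep^2\bigr]
\]
for all $x\in V_i$, $i\ne j$, and sufficiently small $\ep>0$. Setting
\[
P_{ij}:=\exp[-2\tPhi(K_i,K_j)/\ep^2],\qquad a:=\exp(\delta/\ep^2),
\]
the hypothesis of Proposition \ref{invm} holds with $X_i=\partial V_i$. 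Consequently $a^{-1}p_i\le\nu_\ep(\partial V_i)\le ap_i$, where $p_i=q_i/\sum_j q_j$ is the Kirchhoff-type stationary distribution of Proposition \ref{sdist}.

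Next I would plug the explicit $P_{ij}$ into the definition of $q_i$. Each $g\in G\{i\}$ has exactly $m-1$ arrows, so
\[
q_i=\sum_{g\in G\{i\}}\exp\Bigl[-\tfrac{2}{\ep^2}\sum_{(k\to l)\in g}\tPhi(K_k,K_l)\Bigr].
\]
Because $G\{i\}$ is finite (bounded independently of $\ep$), the sum is dominated, in the logarithmic asymptotic, by its largest term, which by definition of $\tW(K_i)$ gives $q_i\asymp\exp[-2\tW(K_i)/\ep^2]$. Likewise $\sum_j q_j\asymp\exp[-2\tW_{\min}/\ep^2]$, where $\tW_{\min}:=\min_j\tW(K_j)$ (this is the quantity that appears, up to what I read as a typographical slip, in the statement).

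Taking the ratio, $p_i$ is comparable to $\exp[-2(\tW(K_i)-\tW_{\min})/\ep^2]$. The uncertainty $\pm\delta/\ep^2$ from each arrow accumulates additively to $\pm(m-1)\delta/\ep^2$ per graph, and since both numerator and denominator contribute such errors independently, the ratio is bracketed by $\exp[\pm 2(m-1)\delta/\ep^2]$ times the leading exponential. Absorbing the further factor $a^{\pm 1}$ from Proposition \ref{invm} and the polynomial-in-$m$ constants from $|G\{i\}|$ into the exponent—which is legitimate once $\ep$ is small enough that $\log(m^{m-2})\ll\delta/\ep^2$—yields the announced estimate after a harmless rescaling of $\delta$ at the outset.

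The main obstacle is the error bookkeeping: one must verify that the combinatorial constant coming from summing over all spanning $\{i\}$-graphs does not corrupt the exponential asymptotic, and that the $\pm\delta$ slack from Lemma \ref{trans-est} propagates through \emph{both} the product along each graph (factor $m-1$) and the ratio in Proposition \ref{sdist} (factor $2$) to give the final exponent $\pm 2(m-1)\delta$. Once the order of quantifiers is set correctly—fix $\delta$, then $\rho_0,\rho_2,\rho_1$ via Lemma \ref{trans-est}, then take $\ep$ small—each step is routine.
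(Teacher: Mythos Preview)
Your proposal is correct and follows essentially the same route as the paper: apply Proposition~\ref{invm} together with Lemma~\ref{trans-est}, express $q_i$ as a sum over $G\{i\}$ of exponentials, bound it above and below by the dominant term $\exp[-2\tW(K_i)/\ep^2]$ times a combinatorial constant, do the same for $\sum_j q_j$ with $\tW_{\min}$, and absorb the polynomial factors into the exponent for small $\ep$. One small point: once you fix the exact $P_{ij}=\exp[-2\tPhi(K_i,K_j)/\ep^2]$, the $q_i$ are computed without further per-arrow error, so the $\pm 2(m-1)\de$ slack comes entirely from the comparison constant in Proposition~\ref{invm} (which, as actually used here and in \cite{FW}, yields $a^{\pm 2(m-1)}$ rather than $a^{\pm 1}$), not from an additional accumulation along the graph---but this does not affect your conclusion.
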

    \begin{proof}
      Applying Proposition \ref{invm} and lemma \ref{trans-est}  we
      have that $\nu_\ep(\partial V_i)$ is between
      \[\frac{q_i\exp (\mp 2(m-1)\ga/\ep)}{\sum_jq_j}, \hbox{ where }
 q_i=\sum_{g\in G\{i\}}\exp\Big[-2\sum_{(k\to l)\in g}\frac{\tPhi(K_k,K_l)}{\ep}\Big].\]
Since $\#(G\{i\})\le l(m)= \begin{pmatrix}m(m-1)\\m-1
\end{pmatrix}$, we have
\begin{align*}
  \exp[-2\tW(K_i)/\ep]&\le q_i\le l(m)\exp[-2\tW(K_i)/\ep],\\
  \exp[-2\tW_{\min}/\ep]&\le\sum_jq_j\le ml(m)
 \exp[-\tW_{\min}/\ep].
\end{align*}
  Thus
    \begin{multline*}
      \frac{\exp\Big[-2(\tW(K_i)-W_{\min}+ (m-1))\ga/\ep]}{ml(m)}
     \le \nu_\ep(K_i)\\\le l(m)\exp\Big[-2(\tW(K_i)-W_{\min}-(m-1))/\ep,] 
   \end{multline*}
   and taking $\ep>0$ sufficiently small we get the assertion of the corollary.
 \end{proof}
  \begin{proposition} $W(K_i)=\tW(K_i)$  
  \end{proposition}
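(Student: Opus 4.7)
The plan is to read both $W(K_i)$ and $\tW(K_i)$ as minimum-weight $\{i\}$-arborescences on the finite directed graph with vertex set $\{K_1,\ldots,K_m\}$, using respectively the edge weights $\Phi(K_k,K_l)$ and $\tPhi(K_k,K_l)$. The displayed formula above expresses $\Phi(K_k,K_l)$ as the shortest $\tPhi$-weighted path length from $K_k$ to $K_l$ in this graph. Thus the statement reduces to the combinatorial fact that, for a weighted directed graph with non-negative edge weights, the minimum-weight $\{i\}$-arborescence is unchanged when each edge weight is replaced by the shortest-path distance between its endpoints. In our setting non-negativity holds because $L(x,v)=\tfrac14|v-b(x)|^2\ge 0$ and hence $\tPhi\ge 0$.

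The inequality $W(K_i)\le\tW(K_i)$ is immediate: the first term in the minimum defining $\Phi$ gives $\Phi(K_k,K_l)\le\tPhi(K_k,K_l)$ for all $k,l$, and summing this term by term over any $g\in G\{i\}$ before minimising yields the inequality.

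For the reverse inequality $\tW(K_i)\le W(K_i)$, I would fix $g^*\in G\{i\}$ achieving $W(K_i)$. For each arrow $(k\to l)\in g^*$ the formula above supplies a chain $k=s_0,s_1,\ldots,s_r=l$ in $\{1,\ldots,m\}$ with
\[\Phi(K_k,K_l)=\sum_{t=1}^{r}\tPhi(K_{s_{t-1}},K_{s_t}).\]
Concatenating all these chains gives a multiset $H$ of arrows on $\{1,\ldots,m\}$ whose total $\tPhi$-weight equals $W(K_i)$. Iteratively following the chain attached to the unique out-arrow of the current vertex in $g^*$ shows that every $k\ne i$ is joined to $i$ by a directed walk inside $H$, the walk terminating because $g^*$ is acyclic. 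I would then extract an $\{i\}$-arborescence $g^{**}\subset H$ by reverse breadth-first search from $i$: for each $v\ne i$ let $d(v)$ denote the length of the shortest $H$-walk from $v$ to $i$, and pick as out-arrow of $v$ any $(v\to w)\in H$ with $d(w)=d(v)-1$. This choice has no cycles, places exactly one out-arrow at each $v\ne i$, and uses each arrow of $H$ at most once. Non-negativity of $\tPhi$ then gives $\sum_{(v\to w)\in g^{**}}\tPhi(K_v,K_w)\le W(K_i)$, so $\tW(K_i)\le W(K_i)$.

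The only technical points are combinatorial bookkeeping: chains may visit $i$ as an interior vertex, so $H$ can contain arrows out of $i$, but these are never selected by the BFS (which only picks out-arrows at vertices $v\ne i$); and distinct chains may reuse a vertex, producing multiplicities in $H$ that are discarded at no cost because $\tPhi\ge 0$. With these remarks the two inequalities close to give $W(K_i)=\tW(K_i)$.
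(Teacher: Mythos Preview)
Your proof is correct and follows the same strategy as the paper: the inequality $W(K_i)\le\tW(K_i)$ is immediate from $\Phi\le\tPhi$, and for the reverse you expand every arrow of an optimal $g^*\in G\{i\}$ into a shortest $\tPhi$-chain and then prune the resulting multiset of arrows back down to an $\{i\}$-graph, using $\tPhi\ge 0$ to discard the surplus at no cost. The only difference is the pruning step: the paper repairs the expanded graph iteratively (deleting out-arrows at $i$, breaking newly created cycles, removing duplicate out-arrows, and repeating), whereas your reverse-BFS extraction accomplishes the same thing in a single clean pass.
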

  \begin{proof}
   The inequality $W(K_i)\le\tW(K_i)$ is clear. Let $g\in G\{i\}$ give the minimum
in \eqref{wki} and let $k\to l$ be in $g$. If $\Phi(K_k,K_l)\ne\tPhi(K_k,K_l)$, we have
\[\tPhi(K_k,K_l)=\tPhi(K_k,K_{s_1})+\cdots+\tPhi(K_{s_n},K_l)\]
and replace $k\to l$ by $k\to s_1\to\cdots\to s_n\to l$.
The sum does not change but the new graph is not an
$\{i\}$-graph. The index $i$ may be one the $s_j$ and if that is the
case we omit the arrow starting at $i$. The new graph may have a cycle
$l\to\ s_j\to\cdots\to s_n\to l$, so in $g$ there was a different
arrow $s_k\to r, k>j$. We omit the new arrow $s_k\to s_{k+1}$ or
$s_n\to l$. There are possibly points $s_j$ with two arrows starting
at this point, and  we now omit the old arrows. Having performed the
omissions the sum does not increase. Repeating this procedure we
arrive at a graph $\tilde g$ for which
\[\sum_{(k\to l)\in\tilde  g}\tPhi(K_k,K_l)\le \sum_{(k\to l)\in g}\Phi(K_k,K_l)\]
\end{proof}

 \begin{theorem}\label{FW}
  Let $\mu_\ep$ be the invariant probability
  measure of the diffusion  process $(X^\ep_t,P^\ep_x)$ defined by
  \eqref{sde}. Then for any  $\ga> 0$ there exists $\de> 0$ (which
  can be chosen arbitrarily small) such that for $x\in K_i$ 
  we have that  $\mu_\ep(B_\de(x))$ is between
  \[\exp[-2(W (K_i) - W_{\min} \pm\ga)/\ep].\]
\end{theorem}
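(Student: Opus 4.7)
\textbf{Proof plan for Theorem \ref{FW}.}

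The plan is to relate the invariant measure $\mu^\ep$ of the continuous-time process to the invariant measure $\nu_\ep$ of the embedded chain $Z_n$ via the Kha\v{s}minski\u{\i}-type renewal formula
\[
\mu^\ep(A)\;=\;
\frac{\displaystyle\int_{\partial V}\E_x^\ep\Big[\int_0^{\tau_1}\chi_A(X_t^\ep)\,dt\Big]\,d\nu_\ep(x)}
     {\displaystyle\int_{\partial V}\E_x^\ep[\tau_1]\,d\nu_\ep(x)},
\]
and then to estimate the numerator and denominator separately with $A=V_i$, using Lemmas \ref{exit-up}, \ref{exit-down} and \ref{enter}, and finally to insert the estimate of Corollary \ref{meas-chain} together with the identification $W(K_i)=\tW(K_i)$ from the Proposition above.

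For the numerator with $A=V_i$, the key geometric observation is that because $\rho_0<\tfrac12\min_{i\ne j}d(K_i,K_j)$ the neighbourhoods $V_j$ are pairwise disjoint and $V\subset M\setminus D$. Consequently, on a single cycle $[\tau_n,\tau_{n+1}]$ of $Z_n$, the process lies in $V_i$ only during $[\tau_n,\sigma_n]$ and only if $Z_n\in\partial V_i$; otherwise the contribution is zero. Thus
\[
\int_{\partial V}\E_x^\ep\Big[\int_0^{\tau_1}\chi_{V_i}(X_t^\ep)\,dt\Big]\,d\nu_\ep(x)
=\int_{\partial V_i}\E_x^\ep[\sigma_1]\,d\nu_\ep(x),
\]
which by Lemmas \ref{exit-up} and \ref{exit-down} lies between $\nu_\ep(\partial V_i)\exp(-\delta/\ep^2)$ and $\nu_\ep(\partial V_i)\exp(\delta/\ep^2)$ for $\rho_1$ small and $\ep$ small. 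For the denominator, split $\tau_1=\sigma_1+(\tau_1-\sigma_1)$; the first summand integrates to something in $[\exp(-\delta/\ep^2),\exp(\delta/\ep^2)]$ after summing $\sum_j\nu_\ep(\partial V_j)=1$, while Lemma \ref{enter} bounds the second summand by a constant $C$ uniformly in $x\in D$ and $\ep<\ep_0$. Since $\sum_j\nu_\ep(\partial V_j)=1$ forces at least one $\nu_\ep(\partial V_j)$ to be bounded below by $1/m$, the denominator satisfies
\[
\tfrac{1}{m}\exp(-\delta/\ep^2)\;\le\;\int_{\partial V}\E_x^\ep[\tau_1]\,d\nu_\ep(x)\;\le\;\exp(\delta/\ep^2)+C.
\]

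Combining the two estimates gives
\[
\mu^\ep(V_i)\;=\;\nu_\ep(\partial V_i)\cdot\exp(\pm 2\delta/\ep^2),
\]
and substituting the bound of Corollary \ref{meas-chain} together with $\tW(K_i)=W(K_i)$ yields
\[
\mu^\ep(V_i)=\exp\!\big[-2(W(K_i)-W_{\min}\pm 2m\delta)/\ep^2\big],
\]
for $\ep$ sufficiently small. Starting the construction with $\delta':=\delta/(2m)$ in place of $\delta$ gives precisely the statement of the theorem, with the same freedom in choosing $\rho=\rho_1$ arbitrarily small.

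The main difficulty I anticipate is the rigorous justification of the renewal identity in our non-compact, non-Harris setting: strictly speaking one must verify that the embedded chain $Z_n$ on $\partial V$ is positive recurrent with a unique invariant probability $\nu_\ep$, that $\E_x^\ep[\tau_1]<\infty$ uniformly and that the formula above indeed recovers $\mu^\ep$. Lemmas \ref{exit-up} and \ref{enter} provide the quantitative ingredients, but some care is needed with the strong Markov property at the successive stopping times $\tau_n$ and $\sigma_n$ and with the measurability of the entrance distribution on $\partial V$; once this framework is in place the exponential estimates above are routine.
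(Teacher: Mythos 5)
Your proposal follows essentially the same route as the paper: both rely on the Kha\v{s}minski\u{\i} cycle/renewal identity relating $\mu^\ep$ to the invariant measure $\nu_\ep$ of the embedded chain $Z_n$ on $\partial V$, and both feed Lemmas \ref{exit-up}, \ref{exit-down}, \ref{enter} and Corollary \ref{meas-chain} into that identity before normalizing. The paper works with the unnormalized formula \eqref{eq:kh}, bounds $\mu^\ep(V_i)$ and $\mu^\ep(M)$ separately, and divides; you work directly with the normalized quotient, bound numerator and denominator, and divide. These are the same computation, written in a slightly different order. Your pigeonhole lower bound on the denominator ($\sum_j \nu_\ep(\partial V_j)=1$ forces some $\nu_\ep(\partial V_j)\ge 1/m$) is a small, equally valid variant of the paper's lower bound $\mu^\ep(M)\ge\mu^\ep(V_i)$ with $i$ chosen to realize $W_{\min}$.

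One point to tidy: the displayed identity
\[
\int_{\partial V}\E_x^\ep\Big[\int_0^{\tau_1}\chi_{V_i}(X_t^\ep)\,dt\Big]\,d\nu_\ep(x)
=\int_{\partial V_i}\E_x^\ep[\sigma_1]\,d\nu_\ep(x)
\]
is not an equality. During $[\tau_n,\sigma_n]$ the path stays in $B_{\rho_0}(K_i)$ but not necessarily in $V_i=B_{\rho_1}(K_i)$ (recall $\rho_1<\rho_0$), so the left side equals $\int_{\partial V_i}\E_x^\ep\big[\int_0^{\sigma_1}\chi_{V_i}(X_t^\ep)\,dt\big]\,d\nu_\ep(x)\le\int_{\partial V_i}\E_x^\ep[\sigma_1]\,d\nu_\ep(x)$. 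This does not damage your estimates: the upper bound uses $\int_0^{\sigma_1}\chi_{V_i}\,dt\le\sigma_1\le\tau_{B_{\rho_0}(K_i)}$ and Lemma \ref{exit-up}, while the lower bound uses Lemma \ref{exit-down} applied to $V_i$ and $\int_0^{\sigma_1}\chi_{V_i}\,dt\ge\int_0^{\tau_{V_i}}\chi_{V_i}\,dt$; just state the inequality rather than an equality. Your closing concern about rigorously justifying the renewal identity is handled in the paper by citing Theorem 4.1 of Khasminskii's book directly.
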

\begin{proof}
  According to theorem 4.1 in \cite{Kh}, $\mu_\ep$ is given up to a factor by
  \begin{equation}\label{eq:kh}
\mu_\ep(A)=\int_{\partial V}\nu_\ep(dy)\ \E_y^\ep\Big[\int_0^{\tau_1}\chi_A(X^\ep_t)\ dt\Big]
  \end{equation}
where $\nu_\ep$ is the invariant probability of the Markov chain $\{Z_n\}$.

Applying lemmas \ref{exit-up}, \ref{exit-down} and corollary
\ref{meas-chain} for $\ga/4m$ we have that $\mu_\ep(B_\de(x))$ is between 
$\exp \Big[-\Big (2W(K_i)-2W_{\min}\pm\dfrac{2m-1}{2m}\ga \Big)/\ep \Big]$. When
$W_{\min}=W(K_i)$ we have 
\[\mu_\ep(\T^d)\ge \mu_\ep(B_\de(K_i))\ge \exp\Big[-\frac{(2m-1)\ga}{2m\ep}\Big].\]

From \eqref{eq:kh} we have
\begin{align*}
  \mu_\ep(\T^d)&=\int_{\partial V}\nu_\ep(dy)\ \E_y^\ep[\tau_1]=
              \int_{\partial V}\nu_\ep(dy)( \E_y^\ep[\si_0]+\E_y^\ep[\E_{X^\ep_{\si_0}}^\ep\tau_1])\\
&\le \sup_{y\in\partial V}\E_y^\ep[\si_0]+\sup_{x\in D}\E_x^\ep[\tau_1]\le
\exp[\ga/2m\ep]+C
\end{align*}
The first term on the r.h.s. comes from \eqref{eq:exit-up} and the
second one from Corollary \ref{c-enter}. 
Normalizing $\mu_\ep$  dividing by $\mu_\ep (\T^d)$, we obtain the assertion of the theorem.
\end{proof}
  \section{A viscosity solution with values $W(A_i)$}
\label{sec:visco}
  \begin{proposition}
  For $K_j$ unstable there exists $K_i$ stable such that $\Phi(K_j,K_i)=0$.
\end{proposition}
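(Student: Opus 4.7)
The plan is two-stage. First I would establish the single-step claim: for any unstable static class $K_j$ there exists a static class $K_i\ne K_j$ with $\Phi(K_j,K_i)=0$ and $f(K_i)<f(K_j)$, where $f$ is a fixed $C^1$ strict critical subsolution of \eqref{eq:HJ}. Then I would iterate on $K_i$ until the process terminates, necessarily at a stable class, by finiteness of $\{K_1,\ldots,K_m\}$ combined with the strict drop of $f$ at each step.

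For the single-step claim, instability of $K_j$ supplies a point $x$ arbitrarily close to $K_j$ with $f(x)<f(K_j)$. The first step is to verify $\om(x)\subset\cA(L)$: since $f$ is a strict critical subsolution we have $Df\cdot b\le -|Df|^2\le 0$, so $f$ is nonincreasing along the $b$-trajectory through $x$ and converges to some constant $c$; by continuity and flow-invariance, $f\equiv c$ on $\om(x)$. Hence $Df\cdot b=0$ along $b$-orbits in $\om(x)$, which via the equality case of the Lyapunov inequality forces $Df=0$ and $(y,0)\in\cA^*(L)$ at every $y\in\om(x)$. Picking any $y\in\om(x)$, it lies in some static class $K_i$; since $f$ is constant on each class, $f(K_i)=f(y)=c\le f(x)<f(K_j)$, ensuring $K_i\ne K_j$.

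Next I would argue $\Phi(K_j,K_i)=0$. Since $L\ge 0$ with equality exactly on $\graf(b)$, the Ma\~n\'e critical value is $0$ and $\Phi\ge 0$. For the upper bound, I would concatenate three pieces to build a curve from a point $z\in K_j$ to $y$: a short geodesic from $z$ to $x$; the $b$-trajectory from $x$ over an interval $[0,T]$ (which has zero $L$-action); and a short geodesic from its endpoint to $y$, with $T$ chosen so that the $b$-trajectory lands arbitrarily close to $y$ (possible because $y\in\om(x)$). A one-variable minimization for the Ma\~n\'e Lagrangian shows that a geodesic segment of length $\delta$ traversed at optimal speed has $L$-action of order $\delta$, so the two connectors contribute arbitrarily little. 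Letting $x\to K_j$ and the approach distance shrink to $0$ yields $\Phi(z,y)\le 0$, hence $\Phi(z,y)=0$; since $\Phi$ vanishes within each static class, this gives $\Phi(K_j,K_i)=0$.

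To finish, iterate: if the $K_i$ produced is itself unstable, apply the single-step claim to it to obtain $K_{i'}$ with $\Phi(K_i,K_{i'})=0$ and $f(K_{i'})<f(K_i)$; the triangle inequality then yields $\Phi(K_j,K_{i'})\le\Phi(K_j,K_i)+\Phi(K_i,K_{i'})=0$, so the chain stays at zero cost. Finiteness of the collection together with the strict monotonicity of the $f$-values along the chain prevents indefinite repetition, so the process must end at a stable class. The main obstacle I anticipate is the short-connector action estimate, which requires a careful choice of traversal speed for the explicit Ma\~n\'e Lagrangian; everything else reduces to monotone convergence of $f$ along $b$-trajectories and the triangle inequality for $\Phi$.
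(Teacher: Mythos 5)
The paper states this proposition without proof, so there is nothing internal to compare against; you are filling a genuine gap, and your plan is correct. It uses exactly the structural facts recorded in the paper's Remark: the Lyapunov inequality $Df\cdot b\le -|Df|^2\le 0$ for a $C^1$ strict critical subsolution, the containment $\cA^*(L)\subset\{p=0\}$, and the characterization of equality in the Lyapunov inequality by membership in $\cA^*(L)$. These give $\omega(x)\subset\cA(L)$ by the monotone-convergence argument you describe, and the triangle inequality for $\Phi$ together with the finiteness of the static classes and the strict decrease of $f$ along your chain forces termination at a stable class. Three small points are worth making explicit in a write-up. First, as you shrink the neighborhood of $K_j$, the class met by $\omega(x)$ may a priori change with $x$; use finiteness of $\{K_1,\dots,K_m\}$ to pass to a sequence $x_n\to K_j$ with $f(x_n)<f(K_j)$ whose $\omega$-limit sets all meet a common class $K_i$, then let $n\to\infty$ in $\Phi(z,y)\le\Phi(z,x_n)+\Phi(x_n,y_n)$. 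Second, that $f$ is constant on each static class follows directly from the subsolution inequality $f(q)-f(p)\le\Phi(p,q)$ applied in both directions, so no connectedness of static classes is needed. Third, the short-connector estimate is the standard Lipschitz bound for the Ma\~n\'e potential at critical level: parametrizing a geodesic of length $\delta$ over $[0,T]$ and optimizing $T$ yields $\Phi(p,q)\le\|b\|_\infty\, d(p,q)$, which makes the connectors' contribution $O(\delta)$ as you claimed.
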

\begin{proof}
  There is $x\notin K_j$ such that $\Phi(K_j,x)=0$. The $\om$-limit
  of $x$ is contained in some $K_l$ and $\Phi(K_j,K_l)=0$. Since
  $x\notin K_i$, $l\ne j$. If $K_l$ is unstable we repeat the argument
  to obtain $K_{J(0)}=K_j,\ldots,K_{J(r)}$ all different with $\Phi(K_{J(l)},K_{J(l+1)})=0$.
Thus, eventually  we arrive to a $K_i$ stable.   
\end{proof}
\begin{proposition}\label{properties-w}
  \begin{enumerate}[(a)]
  \item  There is $g\in G\{i\}$ for which the minimum in
    \eqref{wki} is attained and for  each unstable
    $K_k$, the arrow $k\to l$ in $g$ corresponds to
    $K_l$ stable with $\Phi (K_k,K_l) = 0$.
  \item For a stable $K_i$, the value $W(K_i)$ can be calculated 
    using graphs only on the set of indices of stable $K_j$.
  \item $W(K_j) = \min\{W(K_i)+\Phi(K_i,K_j):K_i\hbox{ stable}\}$. 
  \end{enumerate}
\end{proposition}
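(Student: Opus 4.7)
The plan is to establish (a), (b), and (c) in order, using the previous proposition and the triangle inequality $\Phi(K_a,K_c)\le\Phi(K_a,K_b)+\Phi(K_b,K_c)$ for the Ma\~n\'e potential. Throughout, for each unstable $K_k$ I fix, via the previous proposition, a stable $K_{\sigma(k)}$ with $\Phi(K_k,K_{\sigma(k)})=0$.

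For (a), I start with any optimal $g\in G\{i\}$ and modify it iteratively. If some unstable $K_k$ has outgoing arrow $k\to l$ violating the property, I attempt to replace it by $k\to\sigma(k)$. If $K_{\sigma(k)}$ is not a descendant of $K_k$ in $g$, this is immediate. If it is a descendant, with path $K_{\sigma(k)}=u_0\to u_1\to\cdots\to u_m=K_k$ in $g$, I also reroute the arrow $u_{m-1}\to K_k$ to $u_{m-1}\to l$. The triangle inequality gives
\[\Phi(K_k,K_{\sigma(k)})+\Phi(u_{m-1},l)-\Phi(K_k,l)-\Phi(u_{m-1},K_k)\le 0,\]
so the total weight does not increase, and a direct check shows no cycle is created and the out-degrees remain correct. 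Since $u_{m-1}$ (if unstable) previously pointed to the unstable $K_k$ and hence already violated the property, the rerouting cannot worsen its status; consequently the number of unstable vertices satisfying the property strictly grows at each step, and the procedure terminates at an optimal $g^*$ with property (a).

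For (b), let $G_S\{i\}$ denote the set of $\{i\}$-graphs using only stable indices and $W_S(K_i)$ its minimum weight. Any $h\in G_S\{i\}$ extends to $h\cup\{k\to\sigma(k):K_k\text{ unstable}\}\in G\{i\}$ of equal weight, since no arrow points into an unstable vertex and no cycle can arise; this gives $W(K_i)\le W_S(K_i)$. Conversely, applying (a), I obtain an optimal $g^*$ in which every arrow issued from an unstable vertex contributes $0$, and short-circuit to produce $h^*\in G_S\{i\}$: for each stable $K_k\ne K_i$, let $h^*(K_k)$ be the first stable vertex reached along the path issued by $g^*$ from $K_k$. The triangle inequality together with the zero contributions yields $\Phi(K_k,h^*(K_k))\le\Phi(K_k,g^*(K_k))$, and summing over stable $K_k\ne K_i$ gives $W_S(K_i)\le\sum(h^*)\le\sum(g^*)=W(K_i)$.

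Part (c) is then routine. The bound $W(K_j)\ge\min\{W(K_i)+\Phi(K_i,K_j):K_i\text{ stable}\}$ follows from the choice $i=j$ and $\Phi(K_j,K_j)=0$. For the reverse, given stable $K_i$, take an optimal $h_i\in G_S\{i\}$ from (b), adjoin the arrow $i\to j$, and delete the outgoing arrow of $j$: the result $h_j$ lies in $G_S\{j\}$ (acyclicity is immediate from the construction) and satisfies $\sum(h_j)\le\sum(h_i)+\Phi(K_i,K_j)=W(K_i)+\Phi(K_i,K_j)$, whence $W(K_j)\le W(K_i)+\Phi(K_i,K_j)$ by (b). The main obstacle is the combinatorial rearrangement in (a): one must simultaneously verify that the two-arrow swap preserves the $\{i\}$-graph structure, that the triangle inequality keeps the weight under control, and that the iteration terminates at a graph fully realizing the unstable-to-stable-with-zero-cost property.
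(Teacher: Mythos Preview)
Your arguments for (a) and (b) are correct and follow essentially the paper's approach; your write-up of (b) is in fact cleaner than the paper's. The genuine gap is in (c), which you treat as routine but only establish when $K_j$ is itself stable. Your inequality $W(K_j)\ge\min\{\cdots\}$ via the choice $i=j$ requires $K_j$ to be one of the stable classes over which the minimum runs, and your construction of $h_j\in G_S\{j\}$ together with the appeal to (b) both presuppose that $j$ is a stable index (otherwise $j$ is not a vertex of $h_i$, there is no outgoing arrow of $j$ to delete, and (b) does not apply to $W(K_j)$). But (c) is asserted for every $K_j$, stable or not, and the unstable case is precisely what the subsequent corollary needs to identify $\phi(K_j)=W(K_j)$ at unstable classes.

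The paper handles the unstable case separately. For the bound $W(K_j)\le W(K_i)+\Phi(K_i,K_j)$ it performs the swap on full $G\{i\}$-graphs rather than on $G_S\{i\}$: in an optimal $g\in G\{i\}$ one removes the arrow $j\to n$ and adjoins $i\to j$, obtaining an element of $G\{j\}$ of weight at most $W(K_i)+\Phi(K_i,K_j)$, with no hypothesis on $K_j$. For $W(K_j)\ge\min\{\cdots\}$ when $K_j$ is unstable, one takes an optimal $g\in G\{j\}$ of the type produced by (a) and adjoins a zero-cost arrow $j\to r$ with $K_r$ stable; this forces a cycle $j\to r\to\cdots\to s\to j$, and $K_s$ must be stable since by (a) every unstable vertex of $g$ points to a stable one while $K_j$ is unstable. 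Deleting $s\to j$ yields a graph in $G\{s\}$ of weight $W(K_j)-\Phi(K_s,K_j)$, so $W(K_j)\ge W(K_s)+\Phi(K_s,K_j)\ge\min\{W(K_i)+\Phi(K_i,K_j):K_i\text{ stable}\}$.
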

\begin{proof}
  (a) Take a graph in $G\{i\}$ the minimum in \eqref{wki} is attained
  and suppose $K_k$ is unstable and the arrow $k\to l$ does not
  satisfy the condition.
  
  If there are no arrows $r\to k$ with $K_r$ unstable we replace $k\to
  l$ by $k\to j$ with $K_j$ stable such that $\Phi(K_k,K_j)=0$.

  If there are not any arrow ending at $k$ we get a graph $g\in
 G\{i\}$ such that $\displaystyle\sum_{(r\to n)\in g}\Phi(K_r,K_n)$ decreases.

If the arrows there arrows $s_1\to k,\ldots, s_n\to k$ with $K_1,\ldots, K_n$ stable
and no cycles are formed we get a graph in $G\{i\}$ such that the
corresponding sum decreases. If a cycle $k\to j\cdots,\to s_r\to k$ is
formed we replace $s_r\to k$ by $s_r\to l$. We  get a graph in $G\{i\}$ and 
since
\[\Phi(K_k,K_j)+\Phi(K_{s_r},K_l)=\Phi(K_{s_r},K_l)\le
  \Phi(K_{s_r},K_k) +\Phi(K_k,K_l),\]
the corresponding sum decreases.

Repeating this procedure we get rid  of the wrong arrows.

(b) Es clear that the minimum calculated 
    using graphs only on the set of indices of stable $K_j$ is not
    greater than $W(K_i)$. For the opposite inequality, for any graph on
    the set of indices of stable $K_j$, we add for each unstable
    $K_j$ an arrow $j\to r$ with $K_r$ stable such that $\Phi(K_j,K_i)=0$,
to obtain a graph in $G\{i\}$ keeping the same sum.

(c) Let $i\ne j$ and take a graph in $G\{i\}$ where the minimum in
  \eqref{wki} is  attained, which has the arrow $j\to n$.
  Omitting $j\to n$ we get two
graphs, one ending at $j$ and the other ending at $i$ (possibly one of
them reduces to a point). The sum of $\Phi(K_k,K_l)$ over
all remaining arrows $k\to l$ is $W(K_i)-\Phi(K_j,K_l)$. Adding now the arrow
$i\to j$ we get a graph $g\in G\{j\}$ such that
\begin{align}\nonumber
  W(K_j)&\le\sum_{(k\to l)\in g}\Phi(K_k,K_l)=W(K_i)-\Phi(K_j,K_n)+\Phi(K_i,K_j)\\
   &\le W(K_i)+\Phi(K_i,K_j).\label{dominated}
\end{align}

  Thus $W(K_j) = \min\limits_{1\le i\le m}W(K_i)+\Phi(K_i,K_j)
\le\min\{W(K_i)+\Phi(K_i,K_j):K_i\hbox{ stable}\}$. 

If $K_j$ is stable, $W(K_j) = \min\{W(K_i)+\Phi(K_i,K_j):K_i\hbox{ stable}\}$.

If $K_j$ is unstable choose $K_r$ stable such that $\Phi(K_j,K_r)=0$ and a graph $g\in G\{i\}$ for which 
$W(K_i)=\displaystyle\sum_{(k\to l)\in g}\Phi(K_k,K_l)$ and for any $K_k$ unstable the arrow $k\to l$ corresponds to
$K_l$ stable with $\Phi(K_k,K_l)=0$.
Adding the arrow $j\to r$, a cycle $j\to r\to\cdots\to s\to j$ with $K_s$ stable
is formed. Omitting the arrow $s\to j$ we obtain a graph $h\in G\{s\}$ such that
\begin{align*}
W(K_j)=\sum_{(k\to l)\in h}\Phi(K_k,K_l)+\Phi(K_s,K_j)&\ge W(K_s)+\Phi(K_s,K_j)\\
  &\ge\min\{W(K_i)+\Phi(K_i,K_j):K_i\hbox{ stable}\}.
\end{align*}
\end{proof}

Take the admissible collection $\{K_i\}$ to be the set of static
classes $\{A_i\}$.
\begin{corollary}
  There is a viscosity solution $\psi$ of \eqref{eq:HJ} with
  $\psi(A_i)=W(A_i)$, $i=1,\ldots, m$ and given by
  \begin{equation}\label{eq:sol}
      \psi(x)=\min_{1\le i\le m}W(A_i)+\Phi(A_i,x)
      =\min\{W(A_i)+\Phi(A_i,x):A_i\hbox{ stable}\}
      \end{equation}
\end{corollary}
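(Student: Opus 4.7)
The plan is to define $\phi$ by the first formula in \eqref{eq:sol} and verify three things: that $\phi(K_j)=W(K_j)$, that $\phi$ is a viscosity solution of \eqref{eq:HJ}, and that the minimum over all indices coincides with the minimum restricted to stable classes. All three will follow from Proposition \ref{properties-w} combined with the standard weak KAM representation of viscosity solutions that was already recalled after Assumption \ref{A1}.

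First I would evaluate $\phi$ at $K_j$. Since $\Phi(K_j,K_j)=0$, the definition gives $\phi(K_j)=\min_{1\le i\le m}W(K_i)+\Phi(K_i,K_j)$, and Proposition \ref{properties-w}(c) identifies this minimum as $W(K_j)$.

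Next I would check the viscosity solution property. For each $K_i$ the function $\Phi(K_i,\cdot)=h(K_i,\cdot)$ is a viscosity solution of \eqref{eq:HJ}, hence so is $W(K_i)+\Phi(K_i,\cdot)$, and their pointwise minimum $\phi$ is automatically a viscosity subsolution. For the supersolution property I would invoke the representation already recalled after Assumption \ref{A1}: under Assumption \ref{A1}, every viscosity solution of \eqref{eq:HJ} has the form $x\mapsto\min_i c_i+\Phi(K_i,x)$, and conversely such a function is a viscosity solution as soon as the constants $c_i$ satisfy the compatibility $c_j\le c_i+\Phi(K_i,K_j)$ for every $i,j$. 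Proposition \ref{properties-w}(c) provides exactly this compatibility for $c_i=W(K_i)$, so $\phi$ is a viscosity solution.

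Finally, for the equality of the two minima in \eqref{eq:sol}, denote them by $\phi_1$ and $\phi_2$; the inequality $\phi_1\le\phi_2$ is immediate. Fix $x\in M$, let the minimum defining $\phi_1(x)$ be attained at some $K_j$, and assume $K_j$ is unstable. By Proposition \ref{properties-w}(c) there exists a stable $K_i$ with $W(K_j)=W(K_i)+\Phi(K_i,K_j)$, and the triangle inequality for the Ma\~n\'e potential yields
\[W(K_i)+\Phi(K_i,x)\le W(K_i)+\Phi(K_i,K_j)+\Phi(K_j,x)=W(K_j)+\Phi(K_j,x)=\phi_1(x),\]
so $\phi_2(x)\le\phi_1(x)$.

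The only step that is not a direct manipulation is the supersolution half of the representation invoked above; however, this is a well-known ingredient of weak KAM theory on compact manifolds, and its sole hypothesis is precisely the compatibility supplied by Proposition \ref{properties-w}(c). Everything else reduces to that proposition and to the triangle inequality for $\Phi$.
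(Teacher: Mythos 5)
Your argument is correct and follows essentially the same route as the paper: the existence and value at each $K_i$ both hinge on the compatibility inequality $W(K_j)\le W(K_i)+\Phi(K_i,K_j)$ established as \eqref{dominated} in the proof of Proposition~\ref{properties-w}(c), and the reduction of the minimum to stable classes is obtained, exactly as in the paper, by picking a stable $K_i$ with $W(K_j)=W(K_i)+\Phi(K_i,K_j)$ and applying the triangle inequality for $\Phi$. One small imprecision worth noting: the statement of Proposition~\ref{properties-w}(c) only gives $W(K_j)$ as a minimum over \emph{stable} indices, so to identify $\phi(K_j)=\min_{1\le i\le m}W(K_i)+\Phi(K_i,K_j)$ with $W(K_j)$ you really need the inequality $W(K_j)\le W(K_i)+\Phi(K_i,K_j)$ for \emph{all} $i$, i.e.\ \eqref{dominated} (or rederive it from (c) via the triangle inequality); citing ``Proposition~\ref{properties-w}(c)'' alone slightly understates what is being used.
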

\begin{proof}
  The existence of a viscosity solution $\psi$ of \eqref{eq:HJ} with
  values $\psi(A_i)=W(A_i)$ and given by the first equality in
  \eqref{eq:sol}, follows from \eqref{dominated}.
  Let $A_j$ be unstable, $x\in \T^d$ and choose $A_i$ stable such that
  $W(A_j)=W(A_i)+\Phi(A_i,A_j)$, then 
 \[W(A_j)+\Phi(A_j,x)=W(A_i)+\Phi(A_i,A_j)+\Phi(A_j,x)\ge W(A_i)+\Phi(A_i,x)\]
which proves the second equality in \eqref{eq:sol}.
\end{proof}
If $x\notin\cA$, taking the admissible collection
$A_1,\ldots,A_m,\{x\}$, we have that $\{x\}$ is unstable, and therefore 
$W(\{x\})$ is  $\psi(x)$ given by \eqref{eq:sol}.

\section{Proof of the results}
\label{sec:main}
\subsection{Proof of theorem \ref{main}}
\label{sec:proof1}
For $x\in\cA$ take the admissible collection of static classes
$\{A_i\}$ and apply Theorem \ref{FW}. 
 For $x\notin\cA$ take the admissible collection $A_1,\ldots,A_m,\{x\}$,
and apply again Theorem \ref{FW}. 
\subsection{Proof of theorem \ref{main1}}
\label{sec:proof2}
Let  $\ep_n>0$ be a sequence converging to $0$, there is a subsequence
$\{\fui_{\ep_{n_k}}\}$ that converges uniformly to a viscosity solution
$\phi$ of \eqref{eq:HJ}.
Let $\ga>0$ and choose $\de>0$ according to Theorem \ref{FW}
and such that $|\fui_{\ep_{n_k}}(x)-\phi(A_i)|<\ga$ for $k$ large and $x\in V_i$.
For $k$ large $\mu_{\ep_{n_k}}(V_i)$ is between
\[\vol(V_i)\exp[-2(\phi(A_i)\pm\ga)/\ep_{n_k}]\]
and also between
 \[\exp[-2(W (A_i) - W_{\min} \pm\ga)/\ep_{n_k}].\]
 Therefore
 \[|\phi(A_i) -W(A_i)+W_{\min}|\le 2\ga-\ep_{n_k}\log \vol(V_i)/2,\]
 and so $|\phi(A_i)-W(A_i)+W_{\min}|\le 2\ga$ for any $\ga>0$. Thus 
 $\phi(A_i)=W(A_i)-W_{\min}$ and then $\phi(x)=\psi(x)-W_{\min}$ where
$\psi$ is given by \eqref{eq:sol}.
 
\end{document}